
\documentclass[11pt,reqno]{amsart}

\numberwithin{equation}{section}


\usepackage{amssymb}
\usepackage{amsmath}
\usepackage{amsbsy}
\usepackage{amscd}
\usepackage{amsthm}
\usepackage{amsfonts}
\usepackage{ifpdf}
\ifpdf \usepackage[pdftex,pdfstartview=FitH,pdfpagemode=none,colorlinks,bookmarks,linkcolor=blue]{hyperref} \else  \usepackage[hypertex]{hyperref} \fi
\usepackage[nobysame,abbrev,alphabetic]{amsrefs}
\usepackage{color}
\usepackage{enumerate}

\newtheorem{theorem}{Theorem}[section]
\newtheorem{lemma}[theorem]{Lemma}
\newtheorem{corollary}[theorem]{Corollary}

\newtheorem{definition}[theorem]{Definition}

\newtheorem{proposition}[theorem]{Proposition}

\def\dstyle { \displaystyle }

\def\p{\phi}
\def\mR {\mathbb {R}^n}


\begin{document}
\title[Optimal Transport method for higher order isoperimetric inequalities]{Some higher order isoperimetric inequalities via the method of optimal transport}

\date{December 22, 2012}
\author{Sun-Yung A. Chang \and Yi Wang
}
\address{Sun-Yung A. Chang, Department of Mathematics, Princeton University, Princeton NJ 08544, USA}
\address{ email: chang@math.princeton.edu}

\address{Yi Wang, Department of Mathematics, Stanford University, Stanford CA 94305, USA}
\address{ email: wangyi@math.stanford.edu}
\subjclass{Primary 35J96; Secondary 52B60}
\thanks{The research of the first author is partially supported
 by NSF grant DMS-0758601; the research of the second author is partially supported
 by NSF grant DMS-1205350}
\begin{abstract}
In this paper, we establish some sharp inequalities
between the volume and the integral of the $k$-th mean curvature for
$k+1$-convex domains in the Euclidean space. The results
generalize the classical Alexandrov-Fenchel
inequalities for convex domains. Our proof utilizes the method of optimal transportation.
\end{abstract}

\maketitle

\section{introduction}
Classical isoperimetric inequality for domains in Euclidean space was rigorously
established by H. A. Schwartz
\cite{Schwartz}(1884) using the method of symmetrization. Later De Giorgi
\cite{De Giorgi} gave a simpler proof based on an early argument of
Steiner. In \cite{Gromov}, Gromov established the inequality
by constructing a map from the domain to the unit ball and applying the
divergence theorem. Inspired by Gromov's idea, it became well-known that
optimal transport method is effective in establishing various sharp
geometric inequalities. See for example \cite{McCann}, \cite{Villani}, \cite{Figalli} etc..

In this paper, we apply optimal transport method to establish some sharp higher order isoperimetric inequalities
of Alexandrov-Flenchel type for a class of domains which includes the convex domains.

Suppose $\Omega\subset \mR$ is a bounded convex set. Consider the set
$$\Omega+tB:=\{x+ty|x\in \Omega, y\in B\}$$ for $t>0$.
By a theorem of Minkowski \cite{Mk}, the volume of the set is a polynomial of degree $n$,
whose expansion is given by
$$\mbox{Vol}(\Omega+tB)=\sum_{k=0}^{n}\binom {n}{k} W_{k}(\Omega)t^{k}.$$
Here $W_{k}(\Omega)$, $k=0,..., n$ are coefficients determined by the set $\Omega$, and $\binom {n}{k}=\frac{n!}{k!(n-k)!}$.
The $k$-th quermassintegral $V_{k}$ is defined as a multiple of the coefficient $W_{n-k}(\Omega)$.
\begin{equation}
V_{k}(\Omega):= \frac{\omega_{k}}{\omega_{n}}W_{n-k}(\Omega).
\end{equation}
Here $\omega_{k}$ denotes the volume of the unit $k$-ball; for an arbitrary domain
 $\Omega$, $V_{n}(\Omega)=vol(\Omega)$ denotes the volume of $\Omega$.

If the boundary $\partial \Omega$ is smooth,
the quermassintegrals can also be represented as the integrals of invariants of the second fundamental form:
Let $L_{\alpha\beta}$ be the second fundamental form on $ \partial \Omega$, and let
$\sigma_{l}(L)$ with $l=0,..., n-1$ be the $l$-th elementary symmetric function of the eigenvalues
of $L$. (Define $\sigma_{0}(L)=1$.) Then
\begin{equation}\label{def}
V_{n-k}(\Omega):=\frac{(n-k)!(k-1)!}{n!}\frac{\omega_{n-k}}{\omega_{n}}\displaystyle \int_{\partial \Omega} \sigma_{k-1}(L)d\mu,
\end{equation}
where $d\mu$ is the surface area of $\partial \Omega$. From the above definition, one can see that $V_{0}(\Omega)=1$,
and $V_{n-1}(\Omega)=\frac{\omega_{n-1}}{n\omega_n}Area(\partial \Omega)$.
As a consequence of the Alexandrov-Fenchel inequalities \cite{Alex1}, one obtains the following family of inequalities: if $\Omega$ is a convex domain
in $\mathbb{R}^{n}$ with smooth boundary, then for $0\leq l\leq n-1$,
\begin{equation}\label{af'}
\left(\frac{V_{l+1}(\Omega)}{V_{l+1}(B)}\right)^\frac{1}{l+1}\leq \left (\frac{V_{l}(\Omega)}{V_l(B)}\right)^\frac{1}{l}.
\end{equation}
For $0\leq l\leq n-2$, (\ref{af'}) is equivalent to, due to the identity (\ref{def}),
\begin{equation}\label{af}
\displaystyle\left( \int_{\partial \Omega} \sigma_{k-1}(L)d\mu\right)^{\frac{1}{n-k}}\leq \bar C\left( \int_{\partial \Omega}
\sigma_{k}(L)d\mu\right)^{\frac{1}{n-1-k}},\\
\end{equation}
where $k=n-1-l$. Here $\bar C=C(k,n)$ denotes the (sharp) constant which is obtained only when $\Omega$ is
a ball in $\mR$.
When $l=n-1$, (\ref{af'}) is the well-known isoperimetric inequality
\begin{equation}vol(\Omega)^{\frac{n-1}{n}}\leq \frac{1}{n\omega_{n}^{\frac{1}{n}}} Area(\partial\Omega). \end{equation}

An open question in the field is if (\ref{af}) holds when the domain is only $k$-convex in the sense
that $\sigma_{l} (L)(x)>0$ for all $ l \leq k$ and all $x\in \partial \Omega$. (In the following, we denote the condition of $k$-convexity by
$L\in \Gamma_k^+$.)
This is indeed the case under the additional assumption that $\Omega $ is star-shaped, as established by Guan-Li \cite{GuanLi}.
In the work of Huisken \cite{Huisken}, he has proved that (\ref{af})
holds for $k=1$ when one assumes in addition that the domain is
outward minimizing. Inequalities of the type (\ref{af}) were
also discussed in Trudinger \cite{Trudinger}. In \cite{Castillon}, Castillon has applied the method of optimal transport to
give a new proof of the Michael-Simon inequality, which in particular implies (\ref{af}) for $k=1$ with
some constant $C(n)$.  In \cite{ChangWang, ChangWang1}, we have established inequalities of the type (\ref{af}) for all $l \leq k$
with some (non-sharp) constant $C(k,n)$ when $\Omega$ is $(k+1)$-convex.

In this note, we apply the method of optimal transport to establish the following
``end version'' of the sharp inequalities in (\ref{af}).


\begin{theorem}\label{thm-1'}Let $\Omega$ be a domain in $\mR$ with smooth boundary. Suppose $ \Omega$ is 2-convex, i.e. $L\in \Gamma^+_2$. Then
\begin{equation}\label{af1}
 vol(\Omega)^{\frac{n-2}{n}}  \leq (\frac{1 }{\omega_{n}})^{\frac{2}{n}} \frac{1}{n(n-1)} \int_{\partial \Omega}Hd\mu,
\end{equation}
where $H=\sigma_1(L)$ is the mean curvature of $\partial \Omega$ and $d\mu$ is the surface area of $\partial \Omega$. The constant in the inequality is sharp and equality
holds only when $\Omega$ is a ball in $\mR$.
\end{theorem}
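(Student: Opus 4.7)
Following Gromov's optimal-transport paradigm for the classical isoperimetric inequality, I would combine a Brenier transport map with the Newton--Maclaurin inequality applied to the second elementary symmetric function of the Hessian. After rescaling so that $\mathrm{vol}(\Omega)=\omega_n$, let $\phi$ be the Brenier potential transporting the uniform probability measure on $\Omega$ to that on the unit ball $B$. Then $\phi$ is convex, $D^2\phi\ge 0$ a.e., $|\nabla\phi|\le 1$ on $\Omega$, and $\det D^2\phi=1$ a.e.\ by the Monge--Amp\`ere equation. Applying Newton--Maclaurin to the positive eigenvalues of $D^2\phi$ gives $\sigma_2(D^2\phi)\ge \binom{n}{2}(\det D^2\phi)^{2/n}=\binom{n}{2}$ pointwise, and hence $2\int_\Omega \sigma_2(D^2\phi)\,dx\ge n(n-1)\omega_n$.

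Next, I would convert this interior integral into a boundary integral using the fact that the first Newton tensor $T_2^{ij}=(\Delta\phi)\delta_{ij}-\phi_{ij}$ is divergence free, so $2\sigma_2(D^2\phi)=T_2^{ij}\phi_{ij}=\partial_j(T_2^{ij}\phi_i)$ and
\begin{equation*}
2\int_\Omega\sigma_2(D^2\phi)\,dx = \int_{\partial\Omega} T_2^{ij}\phi_i\nu_j\,d\mu.
\end{equation*}
Decomposing in a frame adapted to $\partial\Omega$, with $u=\phi_\nu$ and $w=\phi|_{\partial\Omega}$, the Gauss--Weingarten relations lead to a Reilly-type identity
\begin{equation*}
\int_{\partial\Omega} T_2^{ij}\phi_i\nu_j\,d\mu = -2\int_{\partial\Omega} \nabla_T u\cdot\nabla_T w\,d\mu+ \int_{\partial\Omega} Hu^2\,d\mu+\int_{\partial\Omega} L(\nabla_T w,\nabla_T w)\,d\mu,
\end{equation*}
where $H=\sigma_1(L)$ and there are no extra boundary terms because $\partial\Omega$ is closed.

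The final step is to bound the right-hand side above by $\int_{\partial\Omega}H\,d\mu$. Two pointwise inputs handle the diagonal terms: the transport constraint $|\nabla\phi|^2=u^2+|\nabla_T w|^2\le 1$ yields $\int_{\partial\Omega} Hu^2\,d\mu\le\int_{\partial\Omega} H\,d\mu-\int_{\partial\Omega} H|\nabla_T w|^2\,d\mu$, while $L\in\Gamma_2^+$ implies $H-\kappa_i>0$ for each principal curvature $\kappa_i$, hence $L(v,v)\le H|v|^2$ pointwise. These together leave, beyond $\int_{\partial\Omega}H\,d\mu$, only the non-positive curvature deficit $-\int_{\partial\Omega}(H|\nabla_T w|^2-L(\nabla_T w,\nabla_T w))\,d\mu$ and the off-diagonal cross term $-2\int_{\partial\Omega}\nabla_T u\cdot\nabla_T w\,d\mu$. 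The main obstacle is precisely this cross term, which has no pointwise sign. I plan to control it via the structural identity $(\nabla_T u)_\alpha=\phi_{\alpha\nu}+L_{\alpha\beta}(\nabla_T w)_\beta$ together with a Cauchy--Schwarz estimate for the positive semidefinite tensor $D^2\phi$, exploiting the freedom to absorb the resulting mixed contribution into the curvature deficit. Combining everything yields $n(n-1)\omega_n\le\int_{\partial\Omega}H\,d\mu$, and tracing the rigidity in each step forces $D^2\phi=I$ and $|\nabla\phi|=1$ on $\partial\Omega$, characterizing the equality case as balls.
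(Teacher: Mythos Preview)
Your setup through the Reilly-type identity is correct and coincides with the paper: the Brenier potential, the Newton--Maclaurin inequality, the divergence theorem, and the tangential/normal decomposition all match, and your treatment of the diagonal terms $\int_M Hu^2 + \int_M L(\nabla_T w,\nabla_T w)$ is exactly the paper's computation of $M_{2,2}$.

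The gap is precisely where you flag the ``main obstacle'': the cross term $-2\int_M \nabla_T u\cdot\nabla_T w\,d\mu = 2\int_M (\Delta_M\phi)\,\phi_\nu\,d\mu$. Your proposed fix---expand $(\nabla_T u)_\alpha = \bar\nabla^2_{\alpha\nu}\phi + L_{\alpha\beta}(\nabla_T w)_\beta$ and apply Cauchy--Schwarz on the positive semidefinite $D^2\phi$---does not close. Any Cauchy--Schwarz bound of the form $-2(D^2\phi)(\nu,\nabla_T w)\le \epsilon\,\bar\nabla^2_{\nu\nu}\phi + \epsilon^{-1}\bar\nabla^2_{\alpha\beta}\phi\,\phi_\alpha\phi_\beta$ injects the normal--normal second derivative $\bar\nabla^2_{\nu\nu}\phi$, which carries no pointwise or integral bound in terms of the second fundamental form; the curvature deficit $\int_M [T_1]_{\alpha\beta}(L)\phi_\alpha\phi_\beta$ cannot absorb it. Similarly the tangential Hessian piece $\bar\nabla^2_{\alpha\beta}\phi\,\phi_\alpha\phi_\beta$ is a second-order quantity in $\phi$, not a curvature term.

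The paper's actual mechanism is different and more delicate. One first replaces $\phi_\nu$ by $\psi:=\sqrt{1-|\nabla_T w|^2}$ \emph{simultaneously} in the cross term and in $Hu^2$, and checks that this replacement only increases the sum: writing $\Delta_M\phi = \bar\Delta|_{T_xM}\phi - H\phi_\nu$ with $\bar\Delta|_{T_xM}\phi\ge 0$, one gets
\[
2\Delta_M\phi\,(\phi_\nu-\psi) + H(\phi_\nu^2-\psi^2) = 2\bar\Delta|_{T_xM}\phi\,(\phi_\nu-\psi) - H(\phi_\nu-\psi)^2 \le 0.
\]
The point of passing to $\psi$ is that $\psi$ is a \emph{function of $|\nabla_T w|$}, so that $2\int_M(\Delta_M\phi)\psi$ can be Taylor-expanded as $-2\sum_k C_k J_k$ with $J_k=\int_M(\Delta_M\phi)|\nabla_T w|^{2k}$, and a one-line integration by parts yields the recursive identity $J_k=\frac{2k}{2k+1}\int_M[T_1]_{\alpha\beta}(\nabla^2_M\phi)\phi_\alpha\phi_\beta|\nabla_T w|^{2(k-1)}$. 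Splitting $\nabla^2_M\phi=\bar\nabla^2\phi-L\phi_\nu$ then gives a nonpositive piece (from $[T_1](\bar\nabla^2\phi)\ge 0$) plus a curvature piece that telescopes, via Fact~(b), back to $\frac{2}{3}\int_M[T_1]_{\alpha\beta}(L)\phi_\alpha\phi_\beta$. This is what replaces your Cauchy--Schwarz step, and without it the argument does not go through.
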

We also prove the inequality between the volume of $\Omega$ and the integral of $\sigma_2(L)$ with the sharp constant.
\begin{theorem}\label{thm-2'}Let $\Omega$ be a domain in $\mR$ with smooth boundary. Suppose $ \Omega$ is 3-convex, i.e. $L\in \Gamma^+_3$. Then
\begin{equation}\label{af2}
 vol(\Omega)^{\frac{n-3}{n}}  \leq (\frac{1 }{\omega_{n}})^{\frac{3}{n}} \frac{1}{3 \binom {n}{3}} \int_{\partial \Omega}\sigma_2(L)d\mu.
\end{equation}
The constant in the inequality is sharp and equality holds
only when $\Omega$ is a ball in $\mR$.
\end{theorem}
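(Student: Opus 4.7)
The plan is to adapt Gromov's optimal transport proof of the isoperimetric inequality, running it with the divergence-free Newton tensor $T_2$ in place of the plain gradient. Let $T=\nabla\phi:\Omega\to B$ be the Brenier map sending $\mathrm{vol}(\Omega)^{-1}dx|_\Omega$ to $\omega_n^{-1}dx|_B$, where $B\subset\mathbb{R}^n$ is the unit ball. Then $\phi$ is convex with $\det(D^2\phi)=\omega_n/\mathrm{vol}(\Omega)$, $|\nabla\phi|\le 1$ in $\Omega$, and $|\nabla\phi|=1$ on $\partial\Omega$ (the regularity handled by a standard density/approximation argument, so $\phi$ can be treated as smooth). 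Since $\phi$ is convex, all eigenvalues of $D^2\phi$ are nonnegative, so iterating the Newton-MacLaurin inequality gives the pointwise bound $\sigma_3(D^2\phi)\ge \binom{n}{3}(\det D^2\phi)^{3/n}$, which integrates to
\begin{equation*}
\int_\Omega \sigma_3(D^2\phi)\,dx \;\ge\; \binom{n}{3}\,\omega_n^{3/n}\,\mathrm{vol}(\Omega)^{(n-3)/n}.
\end{equation*}

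Next, using that the Newton tensor $T_2(A)=\sigma_2(A)I-\sigma_1(A)A+A^2$ of a Hessian is divergence-free and satisfies $T_2^{ij}(D^2\phi)\,\phi_{ij}=3\sigma_3(D^2\phi)$, the divergence theorem gives
\begin{equation*}
3\int_\Omega \sigma_3(D^2\phi)\,dx \;=\; \int_{\partial\Omega}\langle T_2(D^2\phi)\nabla\phi,\,\nu\rangle\,d\mu.
\end{equation*}
The entire proof then reduces to establishing the pointwise boundary inequality
\begin{equation*}
\langle T_2(D^2\phi)\nabla\phi,\,\nu\rangle \;\le\; \sigma_2(L) \quad\text{on }\partial\Omega,\qquad(\ast)
\end{equation*}
after which chaining the three displays and rearranging yields (\ref{af2}).

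Estimate $(\ast)$ is the principal obstacle. At a boundary point $p$ I choose an orthonormal frame $\{e_1,\dots,e_{n-1},\nu\}$ with $e_\alpha$ tangent to $\partial\Omega$ and diagonalizing $L$, and write the Hessian in block form: tangential block $B_{\alpha\beta}=\phi_{\alpha\beta}$, mixed column $c_\alpha=\phi_{\alpha\nu}$, and corner $d=\phi_{\nu\nu}$. The Gauss-Weingarten formula gives $B_{\alpha\beta}=\mathrm{Hess}^{\partial\Omega}(\phi|_{\partial\Omega})_{\alpha\beta}+\phi_\nu L_{\alpha\beta}$, and differentiating $|\nabla\phi|^2\equiv 1$ tangentially along $\partial\Omega$ produces the coupling $\phi_\nu c_\alpha=-\phi_\beta B_{\beta\alpha}$. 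Expanding $\langle T_2(D^2\phi)\nabla\phi,\nu\rangle$ with $T_2=\sigma_2I-\sigma_1A+A^2$ in this frame and plugging in these two identities, the task is to absorb all the tangential-Hessian and mixed terms into $\sigma_2(L)$, using both the boundary bound $\phi_\nu\le 1$ and the $3$-convexity $L\in\Gamma_3^+$; the latter is precisely what makes the relevant G\aa rding-type algebraic inequalities run. Equality throughout forces (by Newton-MacLaurin) $D^2\phi=\lambda I$, so $\phi$ is a radial quadratic and $\Omega$ is a ball, matching the sharp case. The delicate point is $(\ast)$: one must simultaneously encode the constraint $|\nabla\phi|=1$ and the cone condition on $L$, and the cross terms $T_2^{\alpha n}\phi_\alpha$ do not individually vanish off the ball, which is what makes the algebra nontrivial.
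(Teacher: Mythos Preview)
Your reduction via Brenier's map, Newton--MacLaurin, and the divergence theorem to the boundary quantity $\int_{M}[T_2]_{ij}(\bar\nabla^2\phi)\,\phi_i n_j\,d\mu$ is exactly what the paper does. The fatal gap is the claimed \emph{pointwise} inequality $(\ast)$: it is simply false, even on smooth strictly convex domains where $|\bar\nabla\phi|=1$ on $\partial\Omega$ is available. Take the ellipsoid $\Omega=\{\sum x_i^2/a_i^2\le 1\}\subset\mathbb{R}^4$; the Brenier map to the unit ball is linear, $\bar\nabla\phi(x)=(x_1/a_1,\dots,x_4/a_4)$, so $\bar\nabla^2\phi=\mathrm{diag}(1/a_i)$. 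At the point $(a_1,0,0,0)$ one has $\nu=\bar\nabla\phi=e_1$, hence
\[
\langle T_2(\bar\nabla^2\phi)\bar\nabla\phi,\nu\rangle=[T_2]_{11}=\tfrac{1}{a_2a_3}+\tfrac{1}{a_2a_4}+\tfrac{1}{a_3a_4},
\qquad
\sigma_2(L)=a_1^2\Bigl(\tfrac{1}{a_2^2a_3^2}+\tfrac{1}{a_2^2a_4^2}+\tfrac{1}{a_3^2a_4^2}\Bigr).
\]
With $a_1=1$, $a_2=a_3=a_4=2$ this reads $3/4\le 3/16$, which fails. So the boundary estimate you need is genuinely an \emph{integral} inequality over $M$, not a pointwise one; your tangential differentiation of $|\bar\nabla\phi|^2\equiv 1$ cannot salvage it, because the tangential Hessian of $\phi$ is not controlled by $L$ at a single point.

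A second, independent problem is the assertion $|\bar\nabla\phi|=1$ on $\partial\Omega$. Caffarelli's boundary regularity, which would give $\bar\nabla\phi:\bar\Omega\to\bar B$ a diffeomorphism, requires convexity of $\Omega$; for a merely $3$-convex domain this is unavailable. The paper handles regularity by approximating with $\Omega_l\Subset\Omega$ and using interior regularity, but then $|\bar\nabla\phi|<1$ strictly on $\partial\Omega_l$, so your coupling identity $\phi_\nu c_\alpha=-\phi_\beta B_{\beta\alpha}$ is not available there either. The paper's proof of the integral inequality $\int_M[T_2]_{ij}(\bar\nabla^2\phi)\phi_in_j\,d\mu\le\int_M\sigma_2(L)\,d\mu$ proceeds instead by splitting $\bar\nabla^2\phi=A+B$, replacing $\phi_n$ by $\psi=\sqrt{1-|\nabla\phi|^2}$, expanding $\psi$ in a Taylor series, and deriving recursive identities via integration by parts \emph{along} $M$ (their Proposition~4.4 and surrounding computations). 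These integrations by parts on $M$ are precisely what redistribute the ``bad'' pointwise excess and make the inequality hold globally; they cannot be replaced by local algebra.
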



We remark that our proof indicates that the sharp version of these inequalities for $(k+1)$-convex domains
are likely to be derived by optimal transport method for all $k$; but the proof would be algebraically challenging. It also remains an open question if the additional one level of convexity
assumption (that is we assume $\Omega$ is $(k+1)$-convex instead of $k$-convex) in our proof is necessary--but the proof
we present here heavily depends on this assumption.

The main difference of Theorem \ref{thm-1'}, and \ref{thm-2'} from our previous work in \cite{ChangWang1} is that we have applied the optimal transport
map from a measure on $\Omega$ to the unit $n$-ball, instead of applying the optimal transport map from the projections of the measure on $\partial \Omega$
to the unit $(n-1)$-balls on hyperplanes of $\mR$.
Also, in order to obtain the sharp constants, we study the delicate
interplays between terms involving the tangential and normal directions of the optimal transport map. We apply the Taylor expansion and derive recursive inequalities in order to estimate each individual term in the expansion.

Finally we would like to clarify that although an optimal transport map $\bar{\nabla}\p$ from $\Omega$ to the unit ball in $\mR$
is smooth up to the boundary if $\Omega$ is strictly convex by the results of Caffarelli (\cite {Caffarelli, Caffarelli2}),
the proof of our main theorems still work if we only assume $\Omega$ is 2 or 3-convex
respectively by an approximation argument. This type of approximation process
is quite standard and has been explained in detail for example in
\cite{ChangWang1} and in many other articles in this research field.
We will not emphasize this point later in our argument and will take for granted
that we can integrate by parts for derivatives of $\p$ on $\partial\Omega$.

{\noindent\bf Acknowledgments:}
The authors wish to thank Alessio Figalli for useful comments
and earlier discussions about the subject in the paper.






\section{Preliminaries}
\subsection{Optimal Transport}
We begin with a similar set-up that Gromov has used in establishing the classical isoperimetric inequality.


By the result of Brenier \cite{Brenier} on optimal transportation, given a probability measure $f(x)dx$ on $\Omega$, there exists
 a convex potential function $\p:\mR\rightarrow \mathbb{R}$, such that $\bar{\nabla}\p$ is the unique optimal transport map
from $\Omega$ to $B(0,1)$ (the unit $n$-ball centered at the origin) which pushes forward the probability measure $f(x)dx$, to the probability measure $g(y)dy= \frac{1}{\omega_{n}} dy$ on $B(0,1)$.
We adopt the convention to denote by $\bar{\nabla}\p$, $\bar{\nabla}^2_{ij}\p$ the gradient and the Hessian of $\p$ with respect to the ambient Euclidean metric, in order to distinguish them from $\nabla\p$, $\nabla^2_{\alpha\beta} \p$ (or $\p_{\alpha\beta}$)--the gradient and the Hessian of $\p$ with respect to the metric of $\partial \Omega$. For simplicity, we will denote the boundary $\partial \Omega$ by $M$ from now on.
Since $\bar{\nabla}\p$ preserves the measure, we have the equation
\begin{equation}\label{monge}
\det(\bar{\nabla}^2 \p)(x)=\frac{f(x)}{g(\bar{\nabla} \p(x))}= \omega_{n}f(x).
\end{equation}
The function $f(x)$ will be specified later.
Also $\bar{\nabla}\p$ is the optimal transport map from $\Omega$ to $B(0,1)$. Therefore $|\bar{\nabla}\p|\leq 1$. Thus
\begin{equation}\label{2.59}
|\nabla\p|^2+\p_n^2\leq 1.
\end{equation}
This is a fact that will be frequently used later in the argument.

\noindent Now the convexity of $\p$ implies $\bar{\nabla}^2 \p$ is a positive definite matrix. Therefore by the geometric-arithmetic inequality
$(\det(\bar{\nabla}^2 \p))^{\frac{k+1}{n}}\leq \frac{1}{\binom{n}{k+1}}\sigma_{k+1}(\bar{\nabla}^2 \p)$. Thus we obtain, by integrating over $\Omega$,
that
\begin{equation}\label{2.52}
\begin{split}
\int_{\Omega}(\omega_{n} f(x))^{\frac{k+1}{n}}dx= &\int_{\Omega}(\det(\bar{\nabla}^2 \p))^{\frac{k+1}{n}}dx\\
\leq & \int_{\Omega}\frac{1}{\binom{n}{k+1}} \sigma_{k+1}(\bar{\nabla}^2 \p) dx\\
=&\int_{\Omega}\frac{1}{(k+1)\binom{n}{k+1}}\bar{\nabla}^2_{ij} \p [T_{k}]_{ij}(\bar{\nabla}^2 \p)dx.\\
\end{split}
\end{equation}
Here $[T_{k}]_{ij}$ is the Newton transformation tensor, defined by $[T_{k}]_{ij}(A):= [T_{k}]_{ij}(\overbrace{A,...,A}^k)$ and
\begin{equation}
[T_{k}]_{ij}(A_1,...,A_k):= \frac{1}{k!}\delta_{j,j_1,...,j_k}^{i,i_1,...,i_k} (A_1)_{i_1j_1}\cdots (A_k)_{i_kj_k}.
\end{equation}
For example $$[T_{1}]_{ij}(A)= Tr(A)\delta_{ij}- A_{ij},$$
where $Tr(A)$ is the trace of $A$.
We used in the last line of (\ref{2.52}) that $\sigma_{k+1}(A)= \frac{1}{k+1}A_{ij}  [T_{k}]_{ij}(A) $.
For more properties of $[T_{k}]_{ij}$, we refer the readers to \cite[section 5.1]{ChangWang}.
Using the fact $\partial_j(\bar{\nabla}^2_{ij} \p)= \partial_i (\bar{\nabla}^2_{jj} \p)$, one can easily show that $\partial_{j}([T_{k}]_{ij})(\bar{\nabla}^2 \p)=0$.
Here $\partial_j$ is the coordinate derivative on $\mR$. Hence we have by the divergence theorem that
\begin{equation}\label{2.54}
\begin{split}
&\int_{\Omega} \frac{1}{(k+1)\binom{n}{k+1}}\bar{\nabla}^2_{ij} \p [T_k]_{ij}(\bar{\nabla}^2 \p)dx\\
=&\frac{1}{(k+1)\binom{n}{k+1}}\int_{\Omega} \partial_j\left(\p_i[T_{k}]_{ij}(\bar{\nabla}^2 \p)\right)dx\\
=&\frac{1}{(k+1)\binom{n}{k+1}}\int_{M} [T_{k}]_{ij}(\bar{\nabla}^2 \p )  \p_i n_j d\mu,
\end{split}
\end{equation}
where $n_j$ is the coordinate of the outward unit normal on $M$. If we combine (\ref{2.52}) and (\ref{2.54}) and specify the probability measure
$f(x)dx:=\frac{1}{vol(\Omega)}dx$ on $\Omega$, we then get
\begin{equation}\label{2.54a}
vol(\Omega)^{1-\frac{k+1}{n}}\leq (\frac{1}{\omega_{n}})^{\frac{k+1}{n}}\frac{1}{(k+1)\binom{n}{k+1}} \int_M [T_k]_{ij}(\bar{\nabla}^2\p )\p_i n_jd\mu.
\end{equation}

Comparing the constants in (\ref{2.54a}) (for $k=1$ and 2 respectively) with the constants in (\ref{af1}) and (\ref{af2}) of Theorem
\ref{thm-1'} and \ref{thm-2'} respectively, we
notice that to prove these theorems it suffices to establish inequalities (\ref{2.55}) and (\ref{2.56}) below.
\begin{equation}\label{2.55}
 \int_M [T_1]_{ij}(\bar{\nabla}^2\p )\p_i n_j d\mu\leq \int_M H d\mu,
\end{equation}
\begin{equation}\label{2.56}
 \int_M [T_2]_{ij}(\bar{\nabla}^2\p )\p_i n_j d\mu\leq \int_M \sigma_2(L) d\mu.
\end{equation}

In the following, we will in fact prove two inequalities which are slightly stronger than (\ref{2.55}) and (\ref{2.56}).
\begin{theorem}\label{thm-1}Suppose $L\in \Gamma_2^+$. Then
\begin{equation}\label{af1a}
\int_M [T_1]_{ij}(\bar{\nabla}^2 \p ) \p_i n_jd\mu\leq \int_{M}H-\frac{1}{3} [{T}_1]_{\alpha\beta}(L)\p_\alpha\p_{\beta}d\mu. \end{equation}
\end{theorem}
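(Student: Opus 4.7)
The plan is to express the boundary integrand in intrinsic terms on $M$, exploit the identity $|\bar{\nabla}\p|^2 \equiv 1$ on $\partial\Omega$, and reduce the theorem to a sharp integral inequality on $M$ that can be closed using 2-convexity.

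At a point $x\in M$, I would pick the adapted orthonormal frame $\{e_1,\dots,e_{n-1},n\}$ with $n$ the outer normal. The Gauss--Weingarten formulas give the block decomposition
$$\bar\nabla^2_{\alpha\beta}\p = \p_{\alpha\beta} + L_{\alpha\beta}\p_n,\qquad \bar\nabla^2_{\alpha n}\p = (\p_n)_{,\alpha} - L_{\alpha\beta}\p_\beta,\qquad \bar\nabla^2_{nn}\p = \p_{nn}.$$
Writing $A_{\alpha\beta}$ for the tangential block, one has $[T_1]_{ij}(\bar\nabla^2\p)\p_i n_j = -\bar\nabla^2_{\alpha n}\p\cdot\p_\alpha + (\Delta_M\p + H\p_n)\p_n$. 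Since $\bar\nabla\p$ sends $\partial\Omega$ onto $\partial B(0,1)$, $|\bar\nabla\p|^2\equiv 1$ on $M$ (so $\p_n>0$), and differentiating this identity tangentially yields the pointwise relation $\bar\nabla^2_{\alpha n}\p = -A_{\alpha\beta}\p_\beta/\p_n$.

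Substituting, integrating on the closed manifold $M$, and using the integration-by-parts identity $\int_M \p_n\Delta_M\p\,d\mu = \int_M \p_\alpha\p_\beta\p_{\alpha\beta}/\p_n\,d\mu$ (obtained from the tangential derivative of $|\bar\nabla\p|^2 = 1$), yields the Reilly-type formula
$$\int_M [T_1]_{ij}(\bar\nabla^2\p)\p_i n_j\,d\mu = \int_M\bigl[2\p_n\Delta_M\p + H\p_n^2 + L_{\alpha\beta}\p_\alpha\p_\beta\bigr]\,d\mu.$$
Applying $H\p_n^2 = H - H|\nabla\p|^2$ and collecting, Theorem~\ref{thm-1} reduces to the cleaner scalar inequality
$$\int_M \p_n\Delta_M\p\,d\mu \;\leq\; \frac{1}{3}\int_M [T_1]_{\alpha\beta}(L)\p_\alpha\p_\beta\,d\mu.$$

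The main obstacle is establishing this reduced inequality with the sharp constant $1/3$. A pointwise version fails---for instance, the matrix $H\delta+2L$ need not be positive semidefinite under only 2-convexity in large dimensions---so the argument must live at the integral level. My plan is to follow the Taylor-expansion strategy the paper alludes to: first write $\p_\alpha\p_\beta\p_{\alpha\beta}/\p_n = A_{\alpha\beta}\p_\alpha\p_\beta/\p_n - L_{\alpha\beta}\p_\alpha\p_\beta$; then invoke the Schur-complement bound $A_{\alpha\beta}\p_\alpha\p_\beta \leq \p_n^2\p_{nn}$ that follows from $\bar\nabla^2\p > 0$; use positivity of $[T_1]_{\alpha\beta}(L)$ (equivalent to 2-convexity via the characterization $|\lambda_i|<H$); and expand $1/\p_n = (1-|\nabla\p|^2)^{-1/2}$ as a power series in $|\nabla\p|^2$. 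A recursive chain of algebraic inequalities relates consecutive terms of the expansion and closes up the estimate, the constant $1/3$ emerging as $\int_0^1 t^2\,dt$ upon summing the resulting series. The subtle point is verifying that every step is sharp, so that the equality case forces $\bar\nabla^2\p = I$ on $M$, which in turn forces $\Omega$ to be a ball.
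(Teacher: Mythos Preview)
Your Reilly-type identity
\[
\int_M [T_1]_{ij}(\bar\nabla^2\p)\p_i n_j\,d\mu = \int_M\bigl[2\p_n\Delta_M\p + H\p_n^2 + L_{\alpha\beta}\p_\alpha\p_\beta\bigr]\,d\mu
\]
is correct and is exactly the content of the paper's Proposition~3.1 (their $L_{2,1}+L_{2,2}$). But it can be obtained directly by integrating $(\p_n)_{,\alpha}\p_\alpha$ by parts---no appeal to $|\bar\nabla\p|=1$ is needed.

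The genuine gap is precisely that appeal. You assert that $\bar\nabla\p$ sends $\partial\Omega$ onto $\partial B(0,1)$, hence $|\bar\nabla\p|^2\equiv 1$ on $M$, and you use this both to derive $\bar\nabla^2_{\alpha n}\p = -A_{\alpha\beta}\p_\beta/\p_n$ and, crucially, to rewrite $H\p_n^2 = H - H|\nabla\p|^2$. This boundary-to-boundary property is a consequence of Caffarelli's boundary regularity and requires $\Omega$ to be \emph{convex}; the theorem is stated only under $L\in\Gamma_2^+$. More to the point, the paper's approximation scheme (end of \S2.1) applies the estimate on interior hypersurfaces $\partial\Omega_l\subset\Omega$, where $|\bar\nabla\p|<1$ strictly, so on the surfaces where the argument actually runs one only has the inequality $\p_n \le \psi:=\sqrt{1-|\nabla\p|^2}$, never equality.

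This is not a cosmetic issue: the whole architecture of the paper's proof is designed around the gap $\psi-\p_n\ge 0$. They define auxiliary quantities $M_{2,1},M_{2,2}$ by replacing $\p_n$ with $\psi$ in the formulas above, and then separately show
\[
L_2-(M_{2,1}+M_{2,2})=\int_M 2\bar\Delta|_{T_xM}\p\,(\p_n-\psi)-H(\p_n-\psi)^2\,d\mu\le 0,
\]
using $\bar\Delta|_{T_xM}\p\ge 0$, $H\ge 0$, and $\p_n\le\psi$. Only after this reduction do they Taylor-expand $\psi$ (not $1/\p_n$) and prove the recursive identity $J_k=\frac{2k}{2k+1}\int_M[T_1]_{\alpha\beta}(\nabla^2\p)\p_\alpha\p_\beta|\nabla\p|^{2(k-1)}d\mu$, which after using $[T_1]_{\alpha\beta}(\bar\nabla^2\p)\ge 0$ and Fact~(b) yields $M_{2,1}\le \frac{2}{3}\int_M[T_1]_{\alpha\beta}(L)\p_\alpha\p_\beta\,d\mu$. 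Your reduced inequality $\int_M\p_n\Delta_M\p\le\frac{1}{3}\int_M[T_1]_{\alpha\beta}(L)\p_\alpha\p_\beta$ is exactly this bound \emph{when} $\p_n=\psi$, but as stated (with $\p_n$ rather than $\psi$) it is neither what you need nor what your outlined Schur-complement/power-series approach would prove, since that approach also presupposes $\p_n^2=1-|\nabla\p|^2$.

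In short: drop the assumption $|\bar\nabla\p|=1$ on $M$, replace $\p_n$ by $\psi$ in the key intermediate quantities, and handle the difference $L_2-(M_{2,1}+M_{2,2})$ as a separate sign estimate. After that, the Taylor-expansion argument you allude to (expanding $\psi$, not $1/\p_n$) goes through and delivers the constant $1/3$.
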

\begin{theorem}\label{thm-2}Suppose $L\in \Gamma_3^+$. Then
\begin{equation}\label{af2a}
\int_M [T_2]_{ij}(\bar{\nabla}^2 \p ) \p_i n_j d\mu \leq \int_M \sigma_2 (L)-\frac{1}{4}\int_M [{T}_2]_{\alpha\beta}(L)
\p_\alpha\p_{\beta}d\mu.
\end{equation}
\end{theorem}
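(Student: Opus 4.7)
The plan is to parallel the strategy used for Theorem \ref{thm-1}, pushing the computation one order higher. The central object is the tangential-normal decomposition of $\bar\nabla^2\p$ along $M$ combined with the algebraic identity
\begin{equation*}
[T_2]_{ij}(A) = \sigma_2(A)\,\delta_{ij} - \sigma_1(A)\,A_{ij} + (A^2)_{ij}.
\end{equation*}
At each boundary point I would choose an orthonormal frame $\{e_1,\dots,e_{n-1},e_n\}$ with $e_n$ the outward unit normal and use the standard boundary formulas
\begin{equation*}
\bar\nabla^2_{\alpha\beta}\p = \p_{\alpha\beta} + L_{\alpha\beta}\p_n,\qquad \bar\nabla^2_{\alpha n}\p = (\p_n)_{;\alpha} - L_{\alpha\gamma}\p_\gamma,\qquad \bar\nabla^2_{nn}\p = \p_{nn},
\end{equation*}
with $\p_{\alpha\beta}$ the intrinsic Hessian on $M$. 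Substituting these into $\p_i[T_2]_{in}(\bar\nabla^2\p)$ expands the integrand into a polynomial in the blocks $\p_\alpha$, $\p_{\alpha\beta}$, $L_{\alpha\beta}$, $\p_n$, $\p_{nn}$, $(\p_n)_{;\alpha}$.

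The first move is to sort this polynomial so that the targeted curvature invariants $\sigma_2(L)$ and $[T_2]_{\alpha\beta}(L)\p_\alpha\p_\beta$ appear explicitly, with the remainder grouped by which intrinsic derivative of $\p$ it carries. I would then integrate by parts on the closed manifold $M$ to push intrinsic derivatives off $\p$; the divergence-free property of Newton tensors on a Euclidean hypersurface, $\nabla^\beta [T_k]_{\alpha\beta}(L)=0$, which follows from Codazzi, ensures that these integrations produce only lower-order curvature terms paired with products of $\nabla\p$, not uncontrolled commutators. Convexity of $\p$ then enters: positivity of $\bar\nabla^2\p$ supplies Cauchy--Schwarz bounds of the shape $(\bar\nabla^2_{\alpha n}\p)^2 \leq \p_{nn}\,\bar\nabla^2_{\alpha\alpha}\p$, which let me discard the mixed-derivative error terms with a favorable sign. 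The hypothesis $L\in\Gamma_3^+$ is used here, via G{\aa}rding's theory of hyperbolic polynomials, to guarantee that $[T_2]_{\alpha\beta}(L)$ is positive semidefinite, so the subtracted term on the right-hand side has a definite sign and can be freely moved across the inequality.

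Finally I would handle the residual polynomial in $\p_n$ whose coefficients involve $\p_\alpha$ and the $L$-invariants. Using the constraint \eqref{2.59}, namely $|\nabla\p|^2+\p_n^2\leq 1$, this is where the Taylor/recursion argument advertised in the introduction is deployed: expand the polynomial in powers of $|\nabla\p|^2$ via $\p_n^2\leq 1-|\nabla\p|^2$ and set up a chain of inequalities in which the term with $|\nabla\p|^{2k}$ is controlled by the term with $|\nabla\p|^{2(k+1)}$; summing the resulting series is designed to deliver precisely the coefficient $1$ in front of $\int_M \sigma_2(L)$ and the coefficient $\frac14$ in front of $\int_M [T_2]_{\alpha\beta}(L)\p_\alpha\p_\beta$.

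The main obstacle I anticipate is pure bookkeeping: the algebraic gymnastics of isolating $\sigma_2(L)$ and $[T_2]_{\alpha\beta}(L)\p_\alpha\p_\beta$ with their \emph{sharp} coefficients, while showing that every other term either vanishes after integration by parts on $M$ or carries the right sign to be discarded. The constant $\frac14$ in particular does not arise from any single pointwise estimate; as the authors themselves flag, it emerges only once the recursive Taylor expansion is summed, and this is the technical heart of the proof and the reason they remark that the $k\ge 3$ analogues would be ``algebraically challenging.''
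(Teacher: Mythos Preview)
Your outline captures the broad architecture of the paper's argument, but it misses the single organizing device that makes the whole computation tractable and sharp: the substitution $\p_n \rightsquigarrow \psi := \sqrt{1-|\nabla\p|^2}$. In the paper, after writing $\bar\nabla^2\p = A+B$ and expanding $L_3 = L_{3,1}+L_{3,2}+L_{3,3}$ by multilinearity of $[T_2]$, one does \emph{not} attack the resulting polynomial directly. Instead one defines companion quantities $M_{3,i}$ by replacing every occurrence of $\p_n$ by $\psi$, and shows in one stroke that $L_3 - (M_{3,1}+M_{3,2}+M_{3,3}) \le 0$ because the difference factors as $-\tfrac{3}{2}\Sigma_2(\bar\nabla^2\p,L)(\p_n-\psi)^2 + \sigma_2(L)(\p_n-\psi)^3$ plus a nonpositive $\sigma_2(\bar\nabla^2\p)$ term. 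This is what replaces your proposed Cauchy--Schwarz step; no inequality of the form $(\bar\nabla^2_{\alpha n}\p)^2 \le \p_{nn}\,\bar\nabla^2_{\alpha\alpha}\p$ is used, and in fact $\p_{nn}$ drops out of the computation entirely at the algebraic stage (Proposition~\ref{3.1}), before any estimation. A pointwise Cauchy--Schwarz bound would almost certainly lose the sharp constant.

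Once one is reduced to bounding $M_{3,1}+M_{3,2}+M_{3,3}$, the Taylor expansion of $\psi$ enters, but not quite as you describe. The recursion is an \emph{equality} (Proposition~\ref{3.4}): integrating $\sigma_2(\nabla^2\p)|\nabla\p|^{2k}$ by parts yields a closed expression involving $[T_2]_{\alpha\beta}(\nabla^2\p)\p_\alpha\p_\beta|\nabla\p|^{2(k-1)}$ and a Ricci term, not a chain of inequalities between consecutive powers. Summing against the Taylor coefficients $C_k$ packages everything into the auxiliary functions $F$ and $G$ of Section~2.2. A crucial cancellation you do not mention then occurs via the Gauss equation $Ric_{\alpha\beta} = [T_1]_{\alpha\gamma}(L)L_{\gamma\beta}$, which kills the Ricci contribution against a piece of $M_{3,3}$. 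What remains is $\int\sigma_2(L) + E_1+E_2+E_3$, and the coefficient $\tfrac14$ comes from the elementary pointwise bound $F\psi \le \tfrac14$ (Fact~(e)) together with a quadratic-in-$\p_n$ estimate, not from summing a recursive chain. Your sketch would benefit from making the $\psi$-substitution explicit as the first reduction; without it the bookkeeping you anticipate becomes genuinely unmanageable.
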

\noindent Note for $L\in \Gamma_2^+$, $[{T}_1]_{\alpha\beta}(L)\geq 0$; and for $L\in \Gamma_3^+$, $[{T}_2]_{\alpha\beta}(L)\geq 0$.
In section 3 and 4, we will prove these two theorems.

We also remark that by our optimal transport method with approximation argument, $\phi$ is smooth on $M$.
But one only needs to assume $\phi$ is $C^3$, since in the proof we will only take covariant derivatives of $\phi$ up to the third order.

{\it {Proof of Theorem \ref{thm-1'} and \ref{thm-2'}}:}
It is clear that inequality (\ref{af1a}) implies (\ref{2.55}), which in turn implies (\ref{af1}); similarly,
inequality (\ref{af2a}) implies (\ref{2.56}), which in turn implies (\ref{af2}).
Now we will show that equalities in (\ref{af1}) and (\ref{af2}) hold only when $\Omega$ is a ball.
In fact we first claim (\ref{af1}) attains
the equality only if \begin{equation}\label{claim1}\bar{\nabla}^2\p(x)= \lambda(x) Id.\end{equation}
The proof of this claim is given in the next paragraph. The same argument applies when
(\ref{af2}) becomes an equality.

We can approximate $\Omega$ by a sequence of subsets $\{\Omega_l\}_{l=1}^{\infty}$ such that $\bar{\Omega}_l\subset \Omega$ and $\partial \Omega_l\rightarrow \partial \Omega$ in $C^3$ norm.
We then apply Theorem 2.1 on each $\Omega_l$ and derive
\begin{equation}\begin{split}
\omega_n^{2/n}vol(\Omega_l) vol(\Omega)^{-2/n}=&\int_{\Omega_l}
 {\det}^{2/n}(\bar{\nabla}^2 \phi )dx\\
 \leq& \frac{2}{n (n-1)} \int_{\Omega_l}\sigma_2 (\bar{\nabla}^2 \phi)dx\\
 \leq &\frac{1}{n (n-1)} \int_{\partial \Omega_l} H_{\partial \Omega_l} d\mu_l. \\
 \end{split}\end{equation}
Here $H_{\partial \Omega_l}$ denotes the mean curvature of boundary of
$\Omega_l$; and $\mu_l$ denotes the area form of the boundary of $\Omega_l$.
Since $\Omega_l$ is contained in the interior of $\Omega$, $\phi$ on $\Omega_l$ is a $C^3$ function up to the boundary \cite{Caffarelli1}, and thus the arguments to prove Theorem 2.1 on $ \Omega_l$ works. (See also the remark right after the statements of Theorem 2.1 and 2.2.) We then take $l\rightarrow \infty$. By dominating convergence theorem, we derive the same inequalities for $\Omega$. Namely,
\begin{equation}\begin{split}
\omega_n^{2/n} vol(\Omega)^{1-2/n}=&\int_{\Omega}
 {\det}^{2/n}(\bar{\nabla}^2 \phi )dx\\
 \leq& \frac{2}{n (n-1)} \int_{\Omega}\sigma_2 (\bar{\nabla}^2 \phi)dx\\
 \leq &\frac{1}{n (n-1)} \int_{\partial \Omega} H d\mu. \\
 \end{split}\end{equation}
Hence equality is attained only if ${\det}^{2/n}(\bar{\nabla}^2 \phi )= \frac{2}{n (n-1)}\sigma_2 (\bar{\nabla}^2 \phi)$, and thus $\bar{\nabla}^2\p(x)= \lambda(x) Id$. This gives the proof of (\ref{claim1}).



Now, by our choice $f(x)=\frac{1}{vol(\Omega)}$ as well as the equation (\ref{monge}), we see that
$\lambda(x)$ must be a constant $\lambda$ on each connected component of $\Omega$, possibly with different values of $\lambda$ on different components. Thus the unique solution $\bar{\nabla}\p $ to the optimal transport problem is a dilation map $\bar{\nabla}\p(x)= \lambda x$ on each connected component of $\Omega$. Hence each connected component of $\Omega$ is a ball. However, if $\Omega $ is the union of two or more balls, we can compute directly that it does not attain the equality in (\ref{af1a}). Therefore $\Omega$ is a ball.



\subsection{Elementary facts}
By Taylor expansion
\begin{equation}\label{defC_k}(1-s^2)^{1/2}=1-\sum_{k=1}^\infty C_k s^{2k}, \end{equation}
where $C_k= \binom {2k} {k}\frac{1}{2^{2k}(2k-1)}= \frac{(2k-3)!!}{k!2^k}\geq 0$.
In the following, we always take $s=|\nabla \p|$, and $\psi:= \sqrt{1-|\nabla \p|^2}$. By (\ref{2.59}), $0\leq s\leq 1$.
Thus we have \\
{\it{\textbf{Fact (a):}}
\begin{equation}\label{defC_k'}\psi=1-\sum_{k=1}^\infty C_k |\nabla \p|^{2k}. \end{equation}
}

\noindent{\it{\textbf{Fact (b):}}
\begin{equation} \label{Factb}  \sum_{k=1}^\infty 2k C_k |\nabla \p|^{2(k-1)} \psi \equiv 1. \end{equation}
}\\
\begin{proof}If we take derivative on both sides of (\ref{defC_k}), then
\begin{equation}
\frac{-s}{(1-s^2)^{1/2}}= -\sum_{k=1}^\infty 2k C_k s^{2k-1}.
\end{equation}
Let $s= |\nabla \phi|$. It then deduces (\ref{Factb}).
\end{proof}
\noindent Define
$$   F(x):= \sum_{k=1}^\infty \frac{k}{k+1} C_k |\nabla \p(x)|^{2(k-1)}, $$
and $$ G(x):= \frac{1}{2}-\sum_{k=1}^\infty \frac{1}{2(k+1)}C_k |\nabla \p(x)|^{2k}. $$
$F$ and $G$ will appear in the proof of Theorem \ref{thm-2}. If we multiply $s$ on both sides of (\ref{defC_k}) and integrate over $[0,s]$, then we derive\\

\noindent {\it{\textbf{Fact (c):}}
$ 3G |\nabla \p|^2= 1-\psi^3.  $
}\\

\noindent By a simple calculation, we also have \\
\noindent {\it{\textbf{Fact (d):}}
$   2G =\psi + F |\nabla \p |^2.    $
}\\

\noindent  {\it{\textbf{Fact (e):}}
$   F \psi \leq \frac{1}{4}.$
}
\begin{proof} of Fact (e) Since $\frac{1}{2(k+1)}\leq \frac{1}{4}$ for $k\geq 1$,
\begin{equation}
\begin{split}
F\psi = & (\sum_{k=1}^\infty \frac{k}{k+1} C_k|\nabla \p|^{2(k-1)} )\psi \\
\leq & \frac{1}{4}\sum_{k=1}^\infty 2k C_k |\nabla \p |^{2(k-1)}\psi= \frac{1}{4},\\
\end{split}
\end{equation}
where the last equality uses Fact (b).
\end{proof}

\section{Proof of Theorem \ref{thm-1}}
Consider the isometric embedding $i:M\rightarrow \mR$, where $M:= \partial \Omega$.
For $x\in M$, one can write the Hessian of $\phi$ in coordinates of tangential derivatives and normal derivatives of $T_xM$. Let
indices $\alpha$, $\beta$ with $\alpha,\beta= 1,...,n-1$ be the tangential directions, $\vec n$ be the outward unit normal direction on $M$,
 and let $i,j$ with $i,j= 1,...,n$ be the coordinates of $\mR$. Recall that we denote by $\p_{\alpha\beta}$ or $\nabla^2_{\alpha\beta} \p$
the Hessian of $\phi$ with respect to the metric of the surface measure on $M$, and by $\bar{\nabla}^2_{ij}\p$ the Hessian of $\p$ with respect to the ambient (Euclidean) metric.
Let $L_{\alpha\beta}(x)$ be the second fundamental form at $x\in M$.\\
It is well known that
$$\bar{\nabla}^2_{\alpha\beta}\p= \p_{\alpha\beta}+L_{\alpha\beta} \p_n,$$
and $$\bar{\nabla}^2_{\alpha n}\p= \nabla_\alpha (\p_n) -L_{\alpha\beta}\p_\beta.$$
Thus we can decompose
$\bar{\nabla}^2_{\alpha\beta}\p= A+ B$,
where
\makeatletter
\renewcommand*\env@matrix[1][*\c@MaxMatrixCols c]{%
  \hskip -\arraycolsep
  \let\@ifnextchar\new@ifnextchar
  \array{#1}}
\makeatother
\begin{equation}\label{A}
A =
 \begin{pmatrix}[ccc|c]
  \cdots &  \cdots & \cdots & \vdots \\
  \cdots & \p_{\alpha\beta}& \cdots & \nabla_\alpha (\p_n) \\
  \cdots &  \cdots & \cdots & \vdots\\
  \hline
  \cdots  &  \nabla_\alpha (\p_n)  &\cdots &\bar{\nabla}^2_{nn}\p   \\
  \end{pmatrix},
 \end{equation}
and
\begin{equation}\label{B}
B =
 \begin{pmatrix}[ccc|c]
 \cdots &  \cdots & \cdots & \vdots \\
  \cdots & L_{\alpha\beta}\p_n & \cdots &  -L_{\alpha\gamma}\p_\beta  \\
  \cdots &  \cdots & \cdots & \vdots\\
  \hline
  \cdots  &  -L_{\alpha\gamma}\p_\beta  &\cdots&  0   \\
 \end{pmatrix}.
 \end{equation}

\noindent Define
\begin{equation}
L_k:= \int_{M} [T_{k-1}]_{ij}(A+B) \p_i n_j d\mu.
\end{equation}
The inequality in Theorem \ref{thm-1} is equivalent to
\begin{equation}
L_2\leq  \int_{M}H -\frac{1}{3}[{T}_{1}]_{\alpha\beta}(L)\p_\alpha\p_\beta d\mu.
\end{equation}
To prove this, we write $L_2 = L_{2,1}+ L_{2,2}$, where
$$L_{2,1}:= \int_{M} [T_{1}]_{ij}(A) \p_i n_j d\mu,$$
and $$L_{2,2}:= \int_{M} [T_{1}]_{ij}(B) \p_i n_j d\mu.$$
\begin{proposition}\label{2.1}
\begin{equation}\label{2.11}
L_{2,1}= 2\int_{M} \Delta \p \p_n d\mu.\\
\end{equation}
\begin{equation}\label{2.12}
L_{2,2}= \int_{M} H \p_n^2+L_{\alpha\beta} \p_\alpha \p_\beta d\mu.\\
\end{equation}
\end{proposition}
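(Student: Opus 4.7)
The proof is essentially a direct calculation once one uses the explicit formula $[T_1]_{ij}(X) = \operatorname{Tr}(X)\delta_{ij} - X_{ij}$ together with the block decomposition of $\bar\nabla^2\phi = A+B$. I would fix orthonormal coordinates at $x\in M$ so that the $n$-th direction is the outward normal, whence $n_j = \delta_{jn}$ and $\phi_i n_i = \phi_n$. Then
\begin{equation*}
[T_1]_{ij}(X)\phi_i n_j \;=\; \operatorname{Tr}(X)\,\phi_n \;-\; X_{\alpha n}\phi_\alpha \;-\; X_{nn}\phi_n,
\end{equation*}
so both identities will follow by reading off the relevant entries of $A$ and $B$ from (\ref{A}) and (\ref{B}).

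For (\ref{2.11}), I would compute $\operatorname{Tr}(A) = A_{\alpha\alpha}+A_{nn} = \Delta\phi + \bar\nabla^2_{nn}\phi$, and use $A_{\alpha n} = \nabla_\alpha(\phi_n)$, $A_{nn} = \bar\nabla^2_{nn}\phi$. The two $\bar\nabla^2_{nn}\phi$-terms cancel, leaving
\begin{equation*}
[T_1]_{ij}(A)\phi_i n_j \;=\; \Delta\phi\cdot\phi_n \;-\; \nabla_\alpha(\phi_n)\,\phi_\alpha.
\end{equation*}
Integrating the second term by parts on the closed surface $M$ (so no boundary contribution) converts it to $+\int_M \phi_n\,\Delta\phi\,d\mu$, which combines with the first term to yield $L_{2,1} = 2\int_M \Delta\phi\cdot\phi_n\,d\mu$.

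For (\ref{2.12}), the computation is purely algebraic. Reading off from $B$: $\operatorname{Tr}(B) = L_{\alpha\alpha}\phi_n + 0 = H\phi_n$, $B_{\alpha n} = -L_{\alpha\beta}\phi_\beta$, and $B_{nn}=0$. Plugging in,
\begin{equation*}
[T_1]_{ij}(B)\phi_i n_j \;=\; H\phi_n^2 \;-\; (-L_{\alpha\beta}\phi_\beta)\,\phi_\alpha \;=\; H\phi_n^2 + L_{\alpha\beta}\phi_\alpha\phi_\beta,
\end{equation*}
and integrating over $M$ gives (\ref{2.12}) with no further manipulation needed.

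There is no real obstacle here; the only care needed is bookkeeping between ambient indices $i,j$ and split indices $\alpha,\beta,n$, and checking that the off-diagonal contributions from $A$ and $B$ combine correctly with the trace terms. The one non-trivial step is the integration by parts on $M$ in the $L_{2,1}$ computation, which is valid precisely because $\partial M = \emptyset$ and $\phi$ is at least $C^2$ on $M$ (as discussed in the approximation remark earlier in the paper).
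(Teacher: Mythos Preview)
Your proof is correct and follows essentially the same route as the paper's own argument: both use the formula $[T_1]_{ij}(X)=\operatorname{Tr}(X)\delta_{ij}-X_{ij}$, read off the relevant entries of $A$ and $B$ in the tangential/normal splitting, observe that the $\bar\nabla^2_{nn}\phi$ contributions cancel in $L_{2,1}$, and then integrate $-\nabla_\alpha(\phi_n)\phi_\alpha$ by parts on the closed hypersurface to obtain the second copy of $\int_M\Delta\phi\,\phi_n\,d\mu$. The computation for $L_{2,2}$ is likewise identical and purely algebraic.
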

\begin{proof}
Recall $$[T_{1}]_{ij}(A)= Tr(A)\delta_{ij}- A_{ij},$$ where $Tr(A)$ denotes the trace of $A$.
\begin{equation}
\begin{split}
L_{2,1}=& \int_{M} (\Delta \p +\p_{nn})\p_n- (\nabla_\alpha(\p_n)\p_\alpha+ \p_{nn}\p_n) d\mu\\
=&\int_{M}\Delta \p\p_n- \nabla_\alpha(\p_n)\p_\alpha d\mu\\
= &\int_{M}2\Delta \p\p_n d\mu.\\
\end{split}
\end{equation}
\begin{equation}
\begin{split}
L_{2,2}=&\int_{M}(Tr(B)\delta_{in}-B_{in})\p_i d\mu\\
=&\int_{M}(Tr(B)\delta_{\alpha n}-B_{\alpha n})\p_\alpha d\mu+\int_{M}(Tr(B)\delta_{nn}-B_{nn})\p_n d\mu\\
=&\int_{M} H \p_n^2+ L_{\alpha\beta} \p_\alpha \p_\beta d\mu.\\
\end{split}
\end{equation}
\end{proof}

We now define $M_{2,1}$, and $M_{2,2}$ the analogous expressions of $L_{2,1}$, $L_{2,2}$ in (\ref{2.11}) and (\ref{2.12}), with
$\p_n$ replaced  by $\psi$:
\begin{definition}
Define
\begin{equation}
M_{2,1}:= 2\int_{M} \Delta \p \psi d\mu,\\
\end{equation}
and
\begin{equation}
M_{2,2}:= \int_{M} H \psi^2+L_{\alpha\beta} \p_\alpha \p_\beta d\mu.\\
\end{equation}
\end{definition}
\begin{proposition}\label{2.2}
\begin{equation}
\begin{split}
M_{2,1}\leq& \frac{2}{3}\int_{M} (H\delta_{\alpha\beta}-L_{\alpha\beta})\p_\alpha \p_\beta d\mu\\
=& \frac{2}{3}\int_{M} [{T}_1]_{\alpha\beta}(L) \p_\alpha \p_\beta d\mu.\\
\end{split}
\end{equation}
\begin{equation}
M_{2,2}=  \int_{M}H- [{T}_{1}]_{\alpha\beta}(L)\p_\alpha \p_\beta  d\mu.\\
\end{equation}
\end{proposition}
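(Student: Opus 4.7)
\emph{Plan.}
The equality $M_{2,2}=\int_M H - [T_1]_{\alpha\beta}(L)\p_\alpha\p_\beta\,d\mu$ is a direct algebraic identity: since $\psi^2 = 1 - |\nabla\p|^2 = 1 - \delta_{\alpha\beta}\p_\alpha\p_\beta$, one checks pointwise on $M$ that $H\psi^2 + L_{\alpha\beta}\p_\alpha\p_\beta = H - (H\delta_{\alpha\beta} - L_{\alpha\beta})\p_\alpha\p_\beta = H - [T_1]_{\alpha\beta}(L)\p_\alpha\p_\beta$, and integration over $M$ gives the claim.

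For the inequality on $M_{2,1}$, my plan is a three-stage argument. First, since $M = \partial\Omega$ is closed, $\int_M \nabla_\alpha X^\alpha\,d\mu = 0$ for every tangent vector field $X$. Applying this to $X^\alpha = \p_\alpha \psi$ together with $\nabla_\alpha \psi = -\p_\beta \p_{\beta\alpha}/\psi$ rewrites $M_{2,1} = 2\int_M \p_\alpha\p_\beta\p_{\alpha\beta}/\psi\,d\mu$. Second, I use the Gauss-type decomposition $\p_{\alpha\beta} = \bar{\nabla}^2_{\alpha\beta}\p - L_{\alpha\beta}\p_n$ to convert the intrinsic Hessian into the ambient one plus extrinsic data; by convexity of $\p$, the tangential block $A_{\alpha\beta} := \bar{\nabla}^2_{\alpha\beta}\p$ is positive semidefinite, so the pointwise bound $\p_\alpha\p_\beta A_{\alpha\beta} \leq \mathrm{tr}(A)\,|\nabla\p|^2 = (\Delta\p + H\p_n)|\nabla\p|^2$ controls the ``convex'' part. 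Third, I expand $\psi$ using Fact~(a) and integrate by parts term-by-term, obtaining $\int_M \Delta\p\, |\nabla\p|^{2k}\,d\mu = -2k\int_M |\nabla\p|^{2(k-1)}\p_\alpha\p_\beta\p_{\alpha\beta}\,d\mu$ (since $\int_M \Delta\p\,d\mu = 0$ on closed $M$). Resumming via Fact~(b), and combining with $|\p_n|\leq \psi$ (from $|\bar\nabla\p|^2 = |\nabla\p|^2 + \p_n^2 \leq 1$) together with the Codazzi identity $\nabla_\alpha [T_1]_{\alpha\beta}(L) = 0$, the boundary curvature terms should reassemble into $\int_M [T_1]_{\alpha\beta}(L)\p_\alpha\p_\beta\,d\mu$ with the correct sign.

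The main obstacle lies in the third stage: the naive convexity bound used in stage two is loose in general, and extracting \emph{precisely} the factor $\frac{2}{3}$ requires delicate tracking of the Taylor coefficients $C_k$. I expect this factor to emerge from the IBP identity $\int_M \Delta\p\,\psi^3\,d\mu = 3\int_M \psi\,\p_\alpha\p_\beta\p_{\alpha\beta}\,d\mu$, obtained from $\frac{d}{du}\psi^3 = -\frac{3}{2}\psi$; this cubic structure foreshadows the use of Facts~(c)--(e) in the proof of Theorem~\ref{thm-2}. The $2$-convexity hypothesis $L \in \Gamma_2^+$ enters only through $[T_1]_{\alpha\beta}(L) \geq 0$, which keeps the right-hand side of the claimed inequality nonnegative and the overall estimate consistent.
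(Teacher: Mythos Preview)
Your treatment of $M_{2,2}$ is correct and identical to the paper's. The gap is in the $M_{2,1}$ argument, and you have essentially identified it yourself: you do not actually have a mechanism that produces the constant $\tfrac{2}{3}$.

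After the single integration by parts you write down,
\[
J_k \;=\; \int_M \Delta\p\,|\nabla\p|^{2k}\,d\mu \;=\; -2k\int_M |\nabla\p|^{2(k-1)}\p_\alpha\p_\beta\p_{\alpha\beta}\,d\mu,
\]
resumming with Fact~(b) only returns you to the Stage~1 identity $M_{2,1}=2\int_M \p_\alpha\p_\beta\p_{\alpha\beta}/\psi\,d\mu$; nothing has been gained. The missing step is a \emph{second} substitution: write $\p_{\alpha\beta}=\Delta\p\,\delta_{\alpha\beta}-[T_1]_{\alpha\beta}(\nabla^2\p)$ inside the integrand, which feeds $J_k$ back into itself and yields the recursive identity
\[
J_k \;=\; -2kJ_k + 2k\int_M [T_1]_{\alpha\beta}(\nabla^2\p)\p_\alpha\p_\beta\,|\nabla\p|^{2(k-1)}\,d\mu,
\qquad\text{hence}\qquad
J_k=\frac{2k}{2k+1}\int_M [T_1]_{\alpha\beta}(\nabla^2\p)\p_\alpha\p_\beta\,|\nabla\p|^{2(k-1)}\,d\mu.
\]
This is Lemma~\ref{2.3} in the paper. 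With this factor in hand one splits $[T_1]_{\alpha\beta}(\nabla^2\p)=[T_1]_{\alpha\beta}(\bar\nabla^2\p)-[T_1]_{\alpha\beta}(L)\p_n$, drops the $\bar\nabla^2\p$ piece by positivity, uses $\frac{2}{2k+1}\le\frac{2}{3}$ (equality at $k=1$), and then resums via Fact~(b) together with $\p_n\le\psi$ to land exactly on $\frac{2}{3}\int_M [T_1]_{\alpha\beta}(L)\p_\alpha\p_\beta\,d\mu$. Your Stage~2 bound $\p_\alpha\p_\beta A_{\alpha\beta}\le \mathrm{tr}(A)|\nabla\p|^2$ is the same positivity fact stated differently, but applying it \emph{before} closing the recursion loses the $\tfrac{1}{2k+1}$ and leads to a circular computation (the leftover $\int_M \Delta\p\,|\nabla\p|^2/\psi$ term reproduces $M_{2,1}$ plus a more singular $\int_M \Delta\p/\psi$).

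Two smaller points. The $\psi^3$ identity you propose, $\int_M \Delta\p\,\psi^3\,d\mu=3\int_M \psi\,\p_\alpha\p_\beta\p_{\alpha\beta}\,d\mu$, combined with $\psi|\nabla\p|^2=\psi-\psi^3$, only produces a single linear relation between $M_{2,1}$, $\int_M\Delta\p\,\psi^3$, and $\int_M\psi\,[T_1]_{\alpha\beta}(\nabla^2\p)\p_\alpha\p_\beta$; it does not close by itself and is not how the $\tfrac{2}{3}$ arises here. And the Codazzi identity $\nabla_\alpha[T_1]_{\alpha\beta}(L)=0$ is not used in this proposition at all; the $[T_1]_{\alpha\beta}(L)$ term appears purely algebraically from the split of $[T_1]_{\alpha\beta}(\nabla^2\p)$, with no integration by parts against $L$.
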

We now assert the inequality (\ref{af1a}) in Theorem \ref{thm-1} follows the inequalities in the above proposition.
To see this, we have
\begin{equation}\label{2.4}
\begin{split}
L_2=&L_{2,1}+ L_{2,2}\\
= & M_{2,1}+ M_{2,2} + \int_{M} 2\Delta \p  (\p_n- \psi) + H (\p_n^2- \psi^2) d\mu\\
\end{split}
\end{equation}
We claim that
\begin{equation}\label{2.5}\int_{M} 2\Delta \p (\p_n- \psi) + H (\p_n^2- \psi^2) d\mu\leq 0.\end{equation}
To see (\ref{2.5}), we re-write $\Delta\p$ as $\Delta\p=\bar{\Delta}|_{T_xM}\p- H\p_n$ and obtain
\begin{equation}
\begin{split}
&\int_{M} 2\Delta\p (\p_n- \psi) + H (\p_n^2- \psi^2) d\mu\\
=&\int_{M} 2\bar{\Delta}|_{T_xM}\p (\p_n- \psi) -2H\p_n (\p_n- \psi)+H (\p_n^2- \psi^2) d\mu\\
=  &\int_{M} 2\bar{\Delta}|_{T_xM}\p (\p_n- \psi)- H (\p_n- \psi)^2 d\mu.\\
\end{split}
\end{equation}
Using (\ref{2.59}), $\p_n\leq \psi= \sqrt{1-|\nabla\p|^2}$; we also have $\bar{\Delta}|_{T_xM}\p \geq 0$ and $H\geq 0$.
Thus (\ref{2.5}) holds. We now deduce from (\ref{2.4}), (\ref{2.5}) and Proposition \ref{2.2} that
\begin{equation}
\begin{split}
L_2\leq & M_{2,1}+ M_{2,2}\\
\leq &\int_{M}H- \frac{1}{3}[{T}_{1}]_{\alpha\beta}(L)\p_\alpha \p_\beta  d\mu.\\
\end{split}
\end{equation}
This finishes the proof of Theorem \ref{thm-1}.

We now begin the proof of Proposition \ref{2.2}. The strategy we will apply here is to expand $\psi$ by the Taylor series and to derive a
recursive inequality for each individual term $\int_M\Delta \p |\nabla\p |^{2k}d\mu$ in the Taylor series. Let us begin with the following lemma.
\begin{lemma}\label{2.3}
Define $$J_k:=\int_{M}\Delta \p |\nabla\p |^{2k}d\mu. $$
Then
\begin{equation}
\dstyle J_k= \frac{2k}{2k+1}\int_{M}[{T}_1]_{\alpha\beta}(\nabla^2 \p)\p_\alpha\p_\beta |\nabla \p|^{2(k-1)}d\mu.\\
\end{equation}
\end{lemma}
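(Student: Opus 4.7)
The plan is a one-line integration by parts on the closed hypersurface $M=\partial\Omega$ combined with the algebraic identity $[T_1]_{\alpha\beta}(\nabla^2\p)=\Delta\p\,\delta_{\alpha\beta}-\p_{\alpha\beta}$. This will produce a recursive identity in which $J_k$ reappears on the right-hand side, and solving algebraically will yield the stated factor $\tfrac{2k}{2k+1}$.

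Concretely, I would first write $J_k=\int_M(\nabla_\alpha\p_\alpha)|\nabla\p|^{2k}\,d\mu$ and integrate by parts on $M$. Since $\partial M=\emptyset$, no boundary term arises, and the chain rule $\nabla_\alpha|\nabla\p|^{2k}=2k\,|\nabla\p|^{2(k-1)}\p_\beta\p_{\alpha\beta}$ yields
\[
J_k=-2k\int_M \p_{\alpha\beta}\p_\alpha\p_\beta\,|\nabla\p|^{2(k-1)}\,d\mu.
\]
Next I would apply the definition of $[T_1]$ at the tangential Hessian, contracted against $\p_\alpha\p_\beta$, to get $\p_{\alpha\beta}\p_\alpha\p_\beta=\Delta\p\,|\nabla\p|^2-[T_1]_{\alpha\beta}(\nabla^2\p)\p_\alpha\p_\beta$. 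Substituting this into the previous display and recognizing the first piece as $J_k$ itself gives
\[
J_k=-2k\,J_k+2k\int_M[T_1]_{\alpha\beta}(\nabla^2\p)\p_\alpha\p_\beta\,|\nabla\p|^{2(k-1)}\,d\mu,
\]
so that $(2k+1)J_k$ equals the right-hand integral, and dividing by $2k+1$ yields the claim.

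There is no genuine obstacle here: the argument is essentially the divergence theorem on a closed manifold plus one elementary algebraic identity. The only points that deserve care are (i) that $\Delta\p$, $\p_{\alpha\beta}$, and $[T_1]$ in this lemma all refer to the intrinsic tangential objects on $M$ (not the ambient ones), consistent with the convention fixed at the beginning of Section 3, and (ii) that the integration by parts on $M$ produces no boundary contribution because $M$ is a smooth closed hypersurface.
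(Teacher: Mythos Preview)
Your proposal is correct and follows essentially the same approach as the paper: integrate $\Delta\p\,|\nabla\p|^{2k}$ by parts on the closed hypersurface $M$, rewrite $\p_{\alpha\beta}\p_\alpha\p_\beta$ via $[T_1]_{\alpha\beta}(\nabla^2\p)=\Delta\p\,\delta_{\alpha\beta}-\p_{\alpha\beta}$ to make $J_k$ reappear, and solve the resulting recursion $(2k+1)J_k=2k\int_M[T_1]_{\alpha\beta}(\nabla^2\p)\p_\alpha\p_\beta|\nabla\p|^{2(k-1)}\,d\mu$. Your caveats about the tangential (intrinsic) meaning of $\Delta\p$, $\p_{\alpha\beta}$, $[T_1]$ and about the absence of a boundary term are exactly the right points to note.
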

\begin{proof}
By integration by parts
\begin{equation}
\begin{split}
J_k:=&\int_{M}\Delta \p |\nabla\p |^{2k}d\mu \\
=&\int_{M} -2k \p_\alpha \p_\beta \p_{\alpha\beta} |\nabla\p |^{2(k-1)}d\mu . \\
\end{split}
\end{equation}
By the definition of $[T_1]$, $\p_{\alpha\beta} = \Delta\p\delta_{\alpha\beta}- [{T}_1]_{\alpha\beta}(\nabla^2 \p)  $. Then
\begin{equation}
\begin{split}
J_k=&\int_{M} -2k \Delta\p |\nabla\p |^{2k}d\mu \\
&+\int_{M} 2k [{T}_1]_{\alpha\beta}(\nabla^2 \p)\p_\alpha \p_\beta |\nabla\p |^{2(k-1)}d\mu \\
= &-2kJ_k+ \int_{M} 2k [{T}_1]_{\alpha\beta}(\nabla^2 \p)\p_\alpha \p_\beta |\nabla\p |^{2(k-1)}d\mu. \\
\end{split}
\end{equation}
Thus
\begin{equation}
J_k= \frac{2k}{2k+1}\int_{M}[{T}_1]_{\alpha\beta}(\nabla^2 \p)\p_\alpha\p_\beta |\nabla \p|^{2(k-1)}d\mu.\\
\end{equation}
This finishes the proof of Lemma \ref{2.3}.
\end{proof}
\noindent{\it {Proof of Proposition \ref{2.2}:}
First, notice that
\begin{equation}
\begin{split}
M_{2,1}:=& \int_{M}2\Delta \p \psi d\mu\\
=& \dstyle 2\int_{M}\Delta \p (1-\sum_{k=1}^{\infty} C_k |\nabla\p |^{2k})d\mu. \\
\end{split}
\end{equation}
Since $\int_{M}\Delta \p d\mu=0$,
$$M_{2,1}=\dstyle -2\sum_{k=1}^{\infty}C_k J_k,$$
where $J_k:=\int_{M}\Delta \p |\nabla\p |^{2k}d\mu $.
Using Lemma \ref{2.3}, we get
$$M_{2,1}=-2 \sum_{k=1}^\infty \frac{2k}{2k+1}C_k\int_{M} [{T}_1]_{\alpha\beta}(\nabla^2 \p)\p_\alpha\p_\beta |\nabla \p|^{2(k-1)}d\mu.$$
Since $\nabla^2_{\alpha\beta} \p =\bar{\nabla}^2_{\alpha\beta} \p- L_{\alpha\beta}\p_n$,
\begin{equation}\label{3.19}
\begin{split}
M_{2,1}=&- \sum_{k=1}^\infty \frac{4k}{2k+1} C_k \int_{M}[{T}_1]_{\alpha\beta}(\bar{\nabla}^2 \p)\p_\alpha\p_\beta |\nabla \p|^{2(k-1)}d\mu\\
&+ \sum_{k=1}^\infty\frac{4k}{2k+1}C_k\int_{M} [{T}_1]_{\alpha\beta}(L)\p_n\p_\alpha\p_\beta |\nabla \p|^{2(k-1)}d\mu.\\
\end{split}
\end{equation}
Since $[{T}_1]_{\alpha\beta}(\bar{\nabla}^2 \p)\geq 0$, the first sum in (\ref{3.19}) is non-positive. Also, in the second sum, we notice $\frac{2}{2k+1}\leq \frac{2}{3}$ for any $k\geq 1$. Therefore
\begin{equation}
\begin{split}
M_{2,1}\leq& \frac{2}{3}\int_{M} [{T}_1]_{\alpha\beta}(L)\p_\alpha\p_\beta(\sum_{k=1}^\infty 2kC_k|\nabla \p|^{2(k-1)})\p_n d\mu.\\
\end{split}
\end{equation}
Now by Fact (b),
$$(\sum_{k=1}^\infty 2kC_k|\nabla \p|^{2(k-1)})\p_n\leq (\sum_{k=1}^\infty 2kC_k|\nabla \p|^{2(k-1)})\psi=1;$$
also we have $ [{T}_1]_{\alpha\beta}(L)\p_\alpha\p_\beta\geq 0$.
Therefore $$\frac{2}{3}\int_{M} [{T}_1]_{\alpha\beta}(L)\p_\alpha\p_\beta(\sum_{k=1}^\infty 2kC_k|\nabla \p|^{2(k-1)})\p_n d\mu
\leq \frac{2}{3}\int_{M} [{T}_1]_{\alpha\beta}(L)\p_\alpha\p_\beta d\mu.
 $$
This completes the proof for $M_{2,1}$. \\
To the term $M_{2,2}$, it is straightforward to see that
\begin{equation}
\begin{split}
M_{2,2}:=& \int_{M} H \psi^2+L_{\alpha\beta} \p_\alpha \p_\beta d\mu\\
= & \int_{M} H - H |\nabla\p|^2+L_{\alpha\beta} \p_\alpha \p_\beta d\mu\\
=& \int_{M}H- [{T}_{1}]_{\alpha\beta}(L)\p_\alpha \p_\beta  d\mu.\\
\end{split}
\end{equation}
This finishes the proof of Proposition \ref{2.2}.

\section{
Proof of Theorem \ref{thm-2}}
\noindent In this section, we will prove
$$L_3:= \int_{M}[T_2]_{ij}(\bar{\nabla}^2\p)  \p_i n_j d\mu\leq
\int_{\partial\Omega} \sigma_2 (L)-\frac{1}{4}\int_{M} [{T}_2]_{\alpha\beta}(L)
\p_\alpha\p_{\beta}d\mu.$$
To prove this, we first decompose $L_3$, using the multi-linearity of $[T_2]_{ij}(\cdot)$, into $L_3=L_{3,1}+ L_{3,2}+L_{3,3} $,
where
$$L_{3,1}:=\int_{M}[T_2]_{ij}(A,A)  \p_i n_j d\mu, $$
$$L_{3,2}:=2\int_{M}[T_2]_{ij}(A,B)  \p_i n_j d\mu, $$
and
$$L_{3,3}:=\int_{M}[T_2]_{ij}(B,B)  \p_i n_j d\mu. $$
$A$ and $B$ are as defined in (\ref{A}) and (\ref{B}).
\begin{proposition}\label{3.1}
\begin{equation}
L_{3,1}= 3\int_{M} \sigma_2(\nabla^2\p)\p_n d\mu - \int_{M} Ric_{\alpha\beta}\p_\alpha\p_\beta \p_n d\mu.\\
\end{equation}
\begin{equation}
L_{3,2}= \frac{3}{2}\int_{M} \Sigma_2(\nabla^2\p, L)\p_n^2 d\mu + \int_{M}[{T}_1]_{\alpha\beta}(\nabla^2\p) L_{\beta\gamma} \p_\alpha\p_\gamma d\mu.\\
\end{equation}
And
\begin{equation}
L_{3,3}= \int_{M} \sigma_{2}(L)\p_n^3 d\mu +\int_{M} [{T}_1]_{\alpha\beta}(L)L_{\beta\gamma}\p_\alpha\p_\gamma \p_n d\mu .\\
\end{equation}
\end{proposition}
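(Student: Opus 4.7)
\textbf{Proof proposal for Proposition \ref{3.1}.} The plan is a direct computation of each integrand in an orthonormal frame adapted to $\vec n$, using the block decomposition of $\bar{\nabla}^2\p=A+B$ from (\ref{A})--(\ref{B}) and the polarization of $[T_2]$
$$[T_2]_{ij}(X,Y) = \tfrac{1}{2}\bigl(\sigma_1(X)\sigma_1(Y)-\mathrm{tr}(XY)\bigr)\delta_{ij} - \tfrac{1}{2}\bigl(\sigma_1(X)Y_{ij}+\sigma_1(Y)X_{ij}\bigr) + \tfrac{1}{2}\bigl((XY)_{ij}+(YX)_{ij}\bigr),$$
which specializes at $Y=X$ to $[T_2]_{ij}(X)=\sigma_2(X)\delta_{ij}-\sigma_1(X)X_{ij}+(X^2)_{ij}$. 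Because $n_j=\delta_{jn}$ in our frame, $[T_2]_{ij}(X,Y)\p_i n_j = [T_2]_{nn}(X,Y)\p_n + [T_2]_{\alpha n}(X,Y)\p_\alpha$, and each piece is evaluated separately. All integrations by parts will be taken on the closed surface $M=\partial\Omega$, so no boundary terms appear.

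For $L_{3,1}$, inserting $\sigma_1(A)=\Delta\p+\bar\nabla^2_{nn}\p$, $A_{nn}=\bar\nabla^2_{nn}\p$, $A_{\alpha n}=\nabla_\alpha\p_n$, and $(A^2)_{nn}=|\nabla\p_n|^2+(\bar\nabla^2_{nn}\p)^2$ produces cancellations of every $\bar\nabla^2_{nn}\p$ piece in $[T_2]_{nn}(A)$ and gives the clean identity $[T_2]_{nn}(A)=\sigma_2(\nabla^2\p)$. The same cancellations applied to $[T_2]_{\alpha n}(A)\p_\alpha$ leave only $-\Delta\p\,\nabla_\alpha\p_n\,\p_\alpha + \p_{\alpha\beta}\nabla_\beta\p_n\,\p_\alpha$. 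Tangential integration by parts on $M$ moves the normal-direction derivative off $\p_n$ and generates the expression $(\nabla_\alpha\Delta\p-\nabla_\beta\p_{\alpha\beta})\p_\alpha\,\p_n$. By the Ricci identity on $M$ this commutator equals $-\mathrm{Ric}_{\alpha\beta}\p_\beta\p_\alpha\p_n$, and combining with $(\Delta\p)^2-|\nabla^2\p|^2 = 2\sigma_2(\nabla^2\p)$ gives the stated formula for $L_{3,1}$.

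For $L_{3,2}$, I expand $[T_2]_{nn}(A,B)$ with the inputs $\sigma_1(B)=H\p_n$, $B_{nn}=0$, $\mathrm{tr}(AB)=\p_n\langle\nabla^2\p,L\rangle-2\,\nabla\p_n\cdot L\p$, and $(AB)_{nn}+(BA)_{nn}=-2\,\nabla\p_n\cdot L\p$; the $\nabla\p_n\cdot L\p$ pieces and the two occurrences of $H\p_n\,\bar\nabla^2_{nn}\p$ cancel in pairs, leaving $[T_2]_{nn}(A,B)=\tfrac{1}{2}\p_n\Sigma_2(\nabla^2\p,L)$, where $\Sigma_2(\nabla^2\p,L):=H\,\Delta\p-\langle\nabla^2\p,L\rangle$. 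A parallel expansion of $[T_2]_{\alpha n}(A,B)\p_\alpha$ (in which the $\bar\nabla^2_{nn}\p$ terms again cancel) reduces to
$$\tfrac{1}{2}[T_1]_{\alpha\beta}(\nabla^2\p)\,L_{\beta\gamma}\p_\alpha\p_\gamma - \tfrac{1}{2}\p_n[T_1]_{\alpha\beta}(L)\,\nabla_\beta\p_n\,\p_\alpha.$$
For the second term I write $\p_n\nabla_\beta\p_n=\tfrac{1}{2}\nabla_\beta(\p_n^2)$ and integrate by parts on $M$; the Codazzi equation $\nabla_\beta L_{\alpha\beta}=\nabla_\alpha H$ (valid since the ambient $\mathbb{R}^n$ is flat) yields $\nabla_\beta[T_1]_{\alpha\beta}(L)=0$, so only $[T_1]_{\alpha\beta}(L)\p_{\alpha\beta}=\Sigma_2(\nabla^2\p,L)$ survives, contributing the extra $\tfrac{1}{2}\Sigma_2(\nabla^2\p,L)\p_n^2$. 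The factor $2$ from polarization assembles everything into the claimed identity.

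For $L_{3,3}$, the block structure of $B$ makes this direct: $\sigma_1(B)=H\p_n$, $\sigma_2(B)=\sigma_2(L)\p_n^2 - |L\p|^2$ where $|L\p|^2=\sum_\alpha(L_{\alpha\gamma}\p_\gamma)^2$, and $(B^2)_{nn}=|L\p|^2$ cancels precisely the $-|L\p|^2$, giving $[T_2]_{nn}(B)=\sigma_2(L)\p_n^2$. For the off-diagonal contribution, using $B_{nn}=0$, $B_{\alpha n}=-L_{\alpha\gamma}\p_\gamma$, $B_{\alpha\beta}=L_{\alpha\beta}\p_n$ yields $[T_2]_{\alpha n}(B)\p_\alpha = \p_n(HL-L^2)_{\alpha\gamma}\p_\alpha\p_\gamma$, which equals $\p_n[T_1]_{\alpha\beta}(L)L_{\beta\gamma}\p_\alpha\p_\gamma$ after the identity $HL-L^2=[T_1](L)\cdot L$ using the commutativity of $L$ with $[T_1](L)=HI-L$. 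No integration by parts is required for this piece. The main bookkeeping obstacle is organizing the many cancellations in $L_{3,2}$, and the structural reason the argument closes is the combination of $B_{nn}=0$ with the Codazzi-based divergence-free property of $[T_1]_{\alpha\beta}(L)$.
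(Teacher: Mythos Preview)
Your proof is correct and proceeds exactly as the paper indicates (``by direct computation''; the paper omits the details). The frame-adapted expansion of $[T_2]_{ij}(\cdot,\cdot)\p_i n_j$ into its $nn$ and $\alpha n$ parts, the systematic cancellation of all $\bar\nabla^2_{nn}\p$ contributions, and the two integrations by parts invoking the Ricci identity (for $L_{3,1}$) and the Codazzi-based divergence-free property of $[T_1]_{\alpha\beta}(L)$ (for $L_{3,2}$) are precisely the ingredients such a computation requires, and your bookkeeping checks out in each of the three cases.
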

Here $\Sigma_2(A, B):= Tr(A)Tr(B)-Tr(AB)$ for any two tensors $A$ and $B$. It is the linear polarization of $\sigma_2(\cdot)$ (up to a multiplicative constant) in the sense that $\Sigma_2(A, A)= 2\sigma_2(A)$, and it is symmetric and linear with respect to $A$ and $B$. For polarization of $\sigma_k$, we refer to \cite{ChangWang, ChangWang1}.
The proof of Proposition \ref{3.1} is by direct computation. Thus we omit it here. Now as what we did in the previous section, we define $M_{3,1}$, $M_{3,2}$, $M_{3,3}$, simply by substituting $\p_n$ by $\psi$ in the formulas of Proposition \ref{3.1}.
\begin{definition}\label{4.3}
\begin{equation}
M_{3,1}= 3\int_{M} \sigma_2(\nabla^2\p)\psi d\mu - \int_{M} Ric_{\alpha\beta}\p_\alpha\p_\beta \psi d\mu.\\
\end{equation}
\begin{equation}
M_{3,2}= \frac{3}{2}\int_{M} \Sigma_2(\nabla^2\p, L)\psi^2 d\mu + \int_{M}[{T}_1]_{\alpha\beta}(\nabla^2\p) L_{\beta\gamma} \p_\alpha\p_\gamma d\mu.\\
\end{equation}
And
\begin{equation}
M_{3,3}= \int_{M} \sigma_{2}(L)\psi^3 d\mu +\int_{M} [{T}_1]_{\alpha\beta}(L)L_{\beta\gamma}\p_\alpha\p_\gamma \psi d\mu .\\
\end{equation}
\end{definition}
We first simplify the formula of $M_{3,2}$.
\begin{proposition}\label{3.2}
\begin{equation}
M_{3,2}=-2\int_{M} [{T}_2]_{\alpha\beta}(\nabla^2\p ,L) \p_\alpha\p_\beta d\mu.
\end{equation}
\end{proposition}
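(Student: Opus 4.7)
\textbf{Proof proposal for Proposition \ref{3.2}.}
The goal is to reduce the right-hand side of Definition \ref{4.3} for $M_{3,2}$ to $-2\int_M [T_2]_{\alpha\beta}(\nabla^2\p, L)\p_\alpha\p_\beta\,d\mu$ using only integration by parts on the closed hypersurface $M$ together with the two algebraic identities (i) $\Sigma_2(\nabla^2\p, L) = [T_1]_{\alpha\beta}(L)\p_{\alpha\beta}$ and (ii) the divergence-freeness $\nabla_\beta [T_1]_{\alpha\beta}(L)=0$, which follows from the Codazzi identity for the second fundamental form of a hypersurface in $\mathbb R^n$.

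First I would substitute $\psi^2 = 1-|\nabla\p|^2$ to split
\[
\tfrac{3}{2}\int_M\Sigma_2(\nabla^2\p, L)\psi^2\,d\mu
= \tfrac{3}{2}\int_M\Sigma_2(\nabla^2\p, L)\,d\mu - \tfrac{3}{2}\int_M\Sigma_2(\nabla^2\p, L)|\nabla\p|^2\,d\mu.
\]
Using identity (i), integration by parts on closed $M$ and the divergence-freeness (ii) show that the first term vanishes: $\int_M[T_1]_{\alpha\beta}(L)\p_{\alpha\beta}\,d\mu = -\int_M\nabla_\beta[T_1]_{\alpha\beta}(L)\,\p_\alpha\,d\mu = 0$. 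For the second term, the same integration by parts with the extra factor $|\nabla\p|^2$ (whose covariant derivative is $2\p_\gamma\p_{\gamma\beta}$) yields
\[
\int_M[T_1]_{\alpha\beta}(L)\p_{\alpha\beta}|\nabla\p|^2\,d\mu = -2\int_M[T_1]_{\alpha\beta}(L)\p_\alpha\p_\gamma\p_{\beta\gamma}\,d\mu.
\]
Combining, $M_{3,2} = 3\int_M[T_1]_{\alpha\beta}(L)\p_\alpha\p_\gamma\p_{\beta\gamma}\,d\mu + \int_M[T_1]_{\alpha\beta}(\nabla^2\p)L_{\beta\gamma}\p_\alpha\p_\gamma\,d\mu$.

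To finish, I would expand both integrands using $[T_1]_{\alpha\beta}(L)=H\delta_{\alpha\beta}-L_{\alpha\beta}$ and $[T_1]_{\alpha\beta}(\nabla^2\p)=\Delta\p\,\delta_{\alpha\beta}-\p_{\alpha\beta}$, and then expand the target $-2[T_2]_{\alpha\beta}(\nabla^2\p, L)\p_\alpha\p_\beta$ using the polarized Newton tensor formula
\[
[T_2]_{\alpha\beta}(A,B) = \tfrac{1}{2}\bigl[\Sigma_2(A,B)\delta_{\alpha\beta} - \sigma_1(A)B_{\alpha\beta} - \sigma_1(B)A_{\alpha\beta} + (AB)_{\alpha\beta} + (BA)_{\alpha\beta}\bigr].
\]
Subtracting the two pointwise expressions reduces the required equality to
\[
\int_M 2[T_1]_{\alpha\beta}(L)\p_{\beta\gamma}\p_\alpha\p_\gamma\,d\mu + \int_M\Sigma_2(\nabla^2\p, L)|\nabla\p|^2\,d\mu = 0,
\]
which is exactly the integration-by-parts identity I obtained above (after identifying $[T_1]_{\alpha\beta}(L)\p_\alpha\p_\gamma\p_{\beta\gamma}$ with $[T_1]_{\alpha\beta}(L)\p_{\beta\gamma}\p_\alpha\p_\gamma$ by symmetry of $\p_{\alpha\beta}$).

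The main technical obstacle is the careful bookkeeping in the polarization step: one must verify that every cross term in $[T_2]_{\alpha\beta}(\nabla^2\p,L)\p_\alpha\p_\beta$ is correctly matched by the two integrals in the new form of $M_{3,2}$, so that the only surviving relation is the divergence identity, and in particular that the Codazzi property of $L$ is what makes $[T_1](L)$ divergence-free (so the two key integration-by-parts steps produce no boundary terms and no extra divergence terms).
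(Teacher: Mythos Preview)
Your proposal is correct and uses essentially the same ingredients as the paper's proof: the Codazzi-based divergence-freeness of $[T_1]_{\alpha\beta}(L)$, integration by parts on the closed hypersurface, and the polarized Newton-tensor identity $2[T_2]_{\alpha\gamma}(A,B)=\Sigma_2(A,B)\delta_{\alpha\gamma}-[T_1]_{\alpha\beta}(A)B_{\beta\gamma}-[T_1]_{\alpha\beta}(B)A_{\beta\gamma}$. The only organizational difference is that the paper splits $\tfrac{3}{2}=1+\tfrac{1}{2}$ and substitutes the integration-by-parts identity for just the $\tfrac{1}{2}$, leaving one copy of $\int_M\Sigma_2(\nabla^2\p,L)\psi^2\,d\mu$ intact so that after replacing $\psi^2$ by $-|\nabla\p|^2$ the $[T_2]$ identity closes pointwise; you instead convert the full $\tfrac{3}{2}$ and then feed the same integration-by-parts identity back in at the end, which is harmless but slightly less economical.
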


\begin{proof}
Since $\psi^2 = 1-|\nabla \p|^2$, applying integration by parts we have
\begin{equation}
\begin{split}
&\int_{M} \Sigma_2(\nabla^2\p, L)\psi^2 d\mu\\
= &\int_{M} (\Delta \p H-\p_{\alpha\beta} L_{\alpha\beta})(1-|\nabla \p|^2) d\mu\\
=&-\int_{M}  \p_\alpha H_\alpha (1-|\nabla \p|^2) + \p_\alpha H (-2\p_\gamma \p_{\gamma\alpha}) d\mu\\
&+ \int_{M}  \p_\alpha L_{\alpha\beta,\beta} (1-|\nabla \p|^2) + \p_\alpha L_{\alpha\beta}  (-2\p_\gamma \p_{\gamma\beta}) d\mu\\
\end{split}
\end{equation}
By Codazzi equation $H_\alpha=L_{\alpha\beta,\beta} $. Hence the first term and the third term in the last equality above are canceled. So we have
\begin{equation}\label{formula3.2}
\begin{split}
&\int_{M} \Sigma_2(\nabla^2\p, L)\psi^2 d\mu\\
=& 2\int_{M} (\p_\alpha H\p_\gamma \p_{\gamma\alpha}-  \p_\alpha L_{\alpha\beta}\p_\gamma \p_{\gamma\beta} )d\mu\\
=& 2\int_{M}  \p_\alpha \p_\gamma [{T}_1]_{\alpha\beta}(L)\p_{\beta\gamma}d\mu.\\
\end{split}
\end{equation}
Substituting $\frac{1}{2}\int_{M} \Sigma_2(\nabla^2\p, L)\psi^2 d\mu$ by formula (\ref{formula3.2}) in $M_{3,2}$, we have
\begin{equation}\label{3.18}
\begin{split}
M_{3,2}=&(1+\frac{1}{2})\int_{M} \Sigma_2(\nabla^2\p, L)\psi^2 d\mu +\int_{M}[{T}_1]_{\alpha\beta}(\nabla^2\p) L_{\beta\gamma} \p_\alpha\p_\gamma d\mu\\
=&\int_{M} \Sigma_{2}(\nabla^2 \p, L) \psi^2 + \p_\alpha\p_\gamma [{T}_1]_{\alpha\beta}(L)\p_{\beta\gamma}+ [{T}_1]_{\alpha\beta}(\nabla^2 \p)L_{\beta\gamma} \p_\alpha\p_\gamma d\mu\\
=&\int_{M} \Sigma_{2}(\nabla^2 \p, L) (1-|\nabla \p|^2) + \p_\alpha\p_\gamma [{T}_1]_{\alpha\beta}(L)\p_{\beta\gamma}\\
&+ [{T}_1]_{\alpha\beta}(\nabla^2 \p)L_{\beta\gamma} \p_\alpha\p_\gamma d\mu.\\
\end{split}
\end{equation}
By the Codazzi equation again,
\begin{equation}
\begin{split}
\int_{M}\Sigma_{2}(\nabla^2 \p, L)d\mu=& \int_{M}\p_\alpha H_\alpha- \p_\alpha L_{\alpha\beta,\beta} d\mu\\
=0,\\
\end{split}
\end{equation}
Therefore (\ref{3.18}) implies
\begin{equation}
\begin{split}
M_{3,2}=&\int_{M} \Sigma_{2}(\nabla^2 \p, L) (-|\nabla\p|^2) + \p_\alpha\p_\gamma [{T}_1]_{\alpha\beta}(L)\p_{\beta\gamma}\\
&+ [T_1]_{\alpha\beta}(\nabla^2 \p)L_{\beta\gamma} \p_\alpha\p_\gamma d\mu.\\
\end{split}
\end{equation}
Using the fact that for tensors $A_{\alpha\beta}$, $B_{\alpha\beta}$
\begin{equation}
\begin{split}
2[T_2]_{\alpha\gamma}(A, B)= \Sigma_{2}(A, B)\delta_{\alpha\gamma} -  [T_1]_{\alpha\beta}(A)B_{\beta\gamma}-  [T_1]_{\alpha\beta}(B)A_{\beta\gamma},
\end{split}
\end{equation}
we have
\begin{equation}
M_{3,2}= -2 \int_{M} [{T}_2]_{\alpha\beta}(\nabla^2\p ,L) \p_\alpha\p_\beta d\mu.
\end{equation}

\end{proof}
\begin{proposition}\label{3.4}Define $$A_k:=\int_{M} \sigma_2(\nabla^2 \p)|\nabla \p|^{2k}d\mu.$$
Then
\begin{equation}
\begin{split}
A_k=&\frac{k}{k+1}\int_{M} [{T}_2]_{\alpha\beta}(\nabla^2\p,\nabla^2\p  ) \p_\alpha\p_\beta |\nabla \p|^{2(k-1)} d\mu\\
&+\frac{1}{2(k+1)} \int_{M} Ric_{\alpha\beta}\p_\alpha\p_\beta|\nabla \p|^{2k} d\mu.\\
\end{split}
\end{equation}
\end{proposition}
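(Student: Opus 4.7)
The plan is to mirror the integration by parts argument of Lemma \ref{2.3}, but with $\sigma_2$ in place of $\Delta\p$. First, I would use the identity $2\sigma_2(A) = A_{\alpha\beta}[T_1]_{\alpha\beta}(A)$ to write
\[
2A_k = \int_M \p_{\alpha\beta}\,[T_1]_{\alpha\beta}(\nabla^2\p)\,|\nabla\p|^{2k}\, d\mu,
\]
then integrate by parts on the closed $(n-1)$-manifold $M=\partial\Omega$ (moving one covariant derivative from $\p_{\alpha\beta}=\nabla_\beta\p_\alpha$) to obtain
\[
2A_k = -\int_M \p_\alpha\, \nabla_\beta\!\bigl([T_1]_{\alpha\beta}(\nabla^2\p)\bigr)\,|\nabla\p|^{2k}\, d\mu \;-\; \int_M \p_\alpha\,[T_1]_{\alpha\beta}(\nabla^2\p)\,\nabla_\beta(|\nabla\p|^{2k})\, d\mu.
\]

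The first integral is where the intrinsic Ricci curvature enters. Since $[T_1]_{\alpha\beta}(\nabla^2\p) = \Delta\p\,\delta_{\alpha\beta}-\p_{\alpha\beta}$, commuting covariant derivatives (equivalently, the Bochner identity on functions) gives $\nabla_\beta\p_{\alpha\beta} = \nabla_\alpha\Delta\p + Ric_{\alpha\beta}\p_\beta$, so $\nabla_\beta[T_1]_{\alpha\beta}(\nabla^2\p) = -Ric_{\alpha\beta}\p_\beta$ and the first integral equals $\int_M Ric_{\alpha\beta}\,\p_\alpha\p_\beta\,|\nabla\p|^{2k}\, d\mu$. For the second integral, I would expand $\nabla_\beta(|\nabla\p|^{2k}) = 2k\,\p_\gamma\p_{\gamma\beta}\,|\nabla\p|^{2(k-1)}$ and invoke the polarization identity $[T_1]_{\alpha\beta}(A)\,A_{\beta\gamma} = \sigma_2(A)\delta_{\alpha\gamma} - [T_2]_{\alpha\gamma}(A,A)$ (the $A=B$ specialization of the formula $2[T_2]_{\alpha\gamma}(A,B) = \Sigma_2(A,B)\delta_{\alpha\gamma}-[T_1]_{\alpha\beta}(A)B_{\beta\gamma}-[T_1]_{\alpha\beta}(B)A_{\beta\gamma}$ used in the proof of Proposition \ref{3.2}). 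This rewrites the second integrand as $2k\bigl(\sigma_2(\nabla^2\p)|\nabla\p|^2 - [T_2]_{\alpha\gamma}(\nabla^2\p,\nabla^2\p)\,\p_\alpha\p_\gamma\bigr)|\nabla\p|^{2(k-1)}$, contributing $-2kA_k + 2k\int_M [T_2]_{\alpha\beta}(\nabla^2\p,\nabla^2\p)\,\p_\alpha\p_\beta\,|\nabla\p|^{2(k-1)}\, d\mu$ to $2A_k$.

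Collecting these two contributions produces
\[
(2k+2)A_k = 2k\int_M [T_2]_{\alpha\beta}(\nabla^2\p,\nabla^2\p)\,\p_\alpha\p_\beta\,|\nabla\p|^{2(k-1)}\, d\mu + \int_M Ric_{\alpha\beta}\,\p_\alpha\p_\beta\,|\nabla\p|^{2k}\, d\mu,
\]
and division by $2(k+1)$ yields the claim. The main obstacle will be keeping track of the correct Bochner/Ricci sign convention on the intrinsic geometry of $M$ together with the polarization algebra relating $[T_1]$, $[T_2]$, and $\sigma_2$; the integration by parts itself is routine because $\partial\Omega$ has no boundary.
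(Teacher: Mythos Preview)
Your argument is correct and is essentially identical to the paper's own proof: the same identity $2\sigma_2(\nabla^2\p)=\p_{\alpha\beta}[T_1]_{\alpha\beta}(\nabla^2\p)$, the same integration by parts on $M$, the same use of $\nabla_\beta[T_1]_{\alpha\beta}(\nabla^2\p)=-Ric_{\alpha\beta}\p_\beta$, and the same Newton-tensor recursion $[T_1]_{\alpha\beta}(\nabla^2\p)\p_{\beta\gamma}=\sigma_2(\nabla^2\p)\delta_{\alpha\gamma}-[T_2]_{\alpha\gamma}(\nabla^2\p,\nabla^2\p)$, followed by solving for $A_k$. The only cosmetic difference is that you track $2A_k$ throughout while the paper keeps the factor $\tfrac12$ on the right-hand side.
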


\begin{proof}Applying properties of $T_1$ and the integration by parts, we have
\begin{equation}\label{4.1}
\begin{split}
A_k:=&\int_{M} \sigma_2(\nabla^2 \p)|\nabla \p|^{2k}d\mu\\
=&\int_{M} \frac{1}{2}\p_{\alpha\beta} [{T}_1]_{\alpha\beta}(\nabla^2 \p)|\nabla\p|^{2k}d\mu\\
=& \int_{M}-\frac{1}{2} \p_\alpha ([{T}_1]_{\alpha\beta}(\nabla^2 \p))_\beta|\nabla \p|^{2k}-\frac{1}{2}\p_\alpha [{T}_1]_{\alpha\beta}(\nabla^2 \p)(|\nabla \p|^{2k})_\beta d\mu.\\
\end{split}
\end{equation}
\begin{equation}
\begin{split}
([T_1]_{\alpha\beta}(\nabla^2\p))_\beta= (\Delta \p \delta_{\alpha\beta}-\p_{\alpha\beta})_\beta
= -Ric_{\alpha\beta}\p_\beta,
\end{split}
\end{equation}
where $Ric_{\alpha\beta}$ denotes the Ricci curvature of $M$. Therefore (\ref{4.1}) becomes
\begin{equation}
\begin{split}
A_k=&\int_{M} \frac{1}{2} \p_\alpha\p_\beta Ric_{\alpha\beta}|\nabla \p|^{2k}- k\p_\alpha [{T}_1]_{\alpha\beta}(\nabla^2\p) \p_\gamma \p_{\gamma\beta} |\nabla \p|^{2(k-1)}  d\mu.\\
\end{split}
\end{equation}
We now notice that $[{T}_1]_{\alpha\beta}(\nabla^2\p)\p_{\gamma\beta}= \sigma_2(\nabla^2 \p)\delta_{\alpha\gamma}- [{T}_2]_{\alpha\gamma}(\nabla^2 \p,\nabla^2\p )$.
Thus
\begin{equation}
\begin{split}
A_k=&\int_{M} [\frac{1}{2} \p_\alpha\p_\beta Ric_{\alpha\beta}|\nabla \p|^{2k}\\
&- k\p_\alpha \left(\sigma_2(\nabla^2 \p)\delta_{\alpha\gamma}- [{T}_2]_{\alpha\gamma}(\nabla^2\p,\nabla^2\p )\right)\p_\gamma |\nabla \p|^{2(k-1)} ] d\mu\\
=&\int_{M} \frac{1}{2} \p_\alpha\p_\beta Ric_{\alpha\beta}|\nabla \p|^{2k}- kA_k+ k\p_\alpha [{T}_2]_{\alpha\gamma}(\nabla^2\p,\nabla^2\p )\p_\gamma |\nabla \p|^{2(k-1)}  d\mu.\\
\end{split}
\end{equation}
Therefore
\begin{equation}
\begin{split}
A_k=&\frac{k}{k+1}\int_{M} [{T}_2]_{\alpha\beta}(\nabla^2\p ,\nabla^2\p ) \p_\alpha\p_\beta |\nabla \p|^{2(k-1)} d\mu\\
&+\frac{1}{2(k+1)} \int_{M} Ric_{\alpha\beta}\p_\alpha\p_\beta|\nabla \p|^{2k} d\mu.\\
\end{split}
\end{equation}

\end{proof}
\begin{corollary}\label{3.5}
\begin{equation}\label{3.51}
\begin{split}
M_{3,1}= & 3\int_{M} \sigma_2(\nabla^2 \p)\psi d\mu- \int_{M} Ric_{\alpha\beta}\p_\alpha\p_\beta \psi d\mu\\
=& -3 \int_{M}[{T}_2]_{\alpha\beta}(\nabla^2\p ,\nabla^2\p) \p_\alpha\p_\beta F d\mu\\
&+3\int_{M} Ric_{\alpha\beta}\p_\alpha\p_\beta G  d\mu - \int_{M} Ric_{\alpha\beta}\p_\alpha\p_\beta \psi d\mu,\\
\end{split}
\end{equation}
where $$F(x):= \sum_{k=1}^\infty \frac{k}{k+1}C_k |\nabla \p|^{2(k-1)}. $$
$$G(x):= \frac{1}{2}- \sum_{k=1}^\infty \frac{1}{2(k+1)}C_k |\nabla \p|^{2k}.$$
\end{corollary}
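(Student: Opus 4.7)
The plan is to expand $M_{3,1}$ by substituting the Taylor series for $\psi$, apply Proposition \ref{3.4} term by term, and then recognize the resulting sums as the integrals involving the generating functions $F$ and $G$ defined in Fact (c)--(e).

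First I would write
\begin{equation*}
3\int_M \sigma_2(\nabla^2\p)\psi\,d\mu = 3\int_M \sigma_2(\nabla^2\p)\,d\mu - 3\sum_{k=1}^\infty C_k A_k,
\end{equation*}
using Fact (a). The zeroth-order term $A_0:=\int_M\sigma_2(\nabla^2\p)\,d\mu$ has to be handled separately, but a direct integration by parts on the closed manifold $M$ (i.e.\ the same calculation that produced Proposition \ref{3.4}, formally evaluated at $k=0$, where the $[{T}_2]$ term disappears) gives $A_0 = \tfrac{1}{2}\int_M Ric_{\alpha\beta}\p_\alpha\p_\beta\,d\mu$. This is the one minor verification that needs to be included explicitly.

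Next I would substitute the formula of Proposition \ref{3.4} into the sum $\sum_{k\ge 1} C_k A_k$, which splits it into two pieces:
\begin{equation*}
\sum_{k=1}^\infty C_k A_k = \sum_{k=1}^\infty \frac{k\,C_k}{k+1}\!\int_M [{T}_2]_{\alpha\beta}(\nabla^2\p,\nabla^2\p)\p_\alpha\p_\beta |\nabla\p|^{2(k-1)} d\mu + \sum_{k=1}^\infty \frac{C_k}{2(k+1)}\!\int_M Ric_{\alpha\beta}\p_\alpha\p_\beta |\nabla\p|^{2k} d\mu.
\end{equation*}
By the very definition of $F$ in Corollary \ref{3.5}, the first sum equals $\int_M [{T}_2]_{\alpha\beta}(\nabla^2\p,\nabla^2\p)\p_\alpha\p_\beta F\,d\mu$. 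For the second sum, the definition of $G$ gives $\sum_{k=1}^\infty \frac{C_k}{2(k+1)}|\nabla\p|^{2k}= \frac12 - G$, so this piece equals $\int_M Ric_{\alpha\beta}\p_\alpha\p_\beta\bigl(\tfrac12 - G\bigr) d\mu$.

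Combining, I get
\begin{equation*}
3\int_M \sigma_2(\nabla^2\p)\psi\,d\mu = \tfrac32 \int_M Ric_{\alpha\beta}\p_\alpha\p_\beta\,d\mu - 3\int_M [{T}_2]\p_\alpha\p_\beta F\,d\mu - 3\int_M Ric_{\alpha\beta}\p_\alpha\p_\beta\bigl(\tfrac12 - G\bigr) d\mu.
\end{equation*}
The two $\tfrac32\int_M Ric_{\alpha\beta}\p_\alpha\p_\beta\,d\mu$ contributions cancel, leaving exactly the first two terms on the right-hand side of (\ref{3.51}); subtracting $\int_M Ric_{\alpha\beta}\p_\alpha\p_\beta\psi\,d\mu$ (the second half of the definition of $M_{3,1}$ in Definition \ref{4.3}) produces the remaining term and finishes the proof.

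The calculation is essentially bookkeeping once one accepts Proposition \ref{3.4}; the only conceptual point is that the awkward-looking functions $F$ and $G$ are precisely engineered so that (i) the weights $\frac{k}{k+1}C_k$ arising from the $[{T}_2]$ part of $A_k$ assemble into $F$, and (ii) the weights $\frac{1}{2(k+1)}C_k$ arising from the $Ric$ part, after including the stand-alone $A_0$ term with coefficient $1/2$, assemble into $G$. The only step requiring care is the evaluation of $A_0$, which I expect to be the main (mild) obstacle since it requires observing that Proposition \ref{3.4} continues to hold at $k=0$ with the $[{T}_2]$ term vanishing, and hence confirming that no extra boundary-of-$M$ contributions appear (which is indeed the case because $M=\partial\Omega$ is closed).
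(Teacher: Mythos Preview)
Your proposal is correct and follows essentially the same route as the paper: expand $\psi$ via Fact~(a), substitute Proposition~\ref{3.4} into $\sum_{k\ge 1} C_k A_k$, and handle the constant term $A_0=\int_M\sigma_2(\nabla^2\p)\,d\mu=\tfrac12\int_M Ric_{\alpha\beta}\p_\alpha\p_\beta\,d\mu$ by the same integration-by-parts computation the paper records explicitly. The identification of the resulting power-series weights with $F$ and $G$ and the subsequent cancellation of the $\tfrac32\int_M Ric_{\alpha\beta}\p_\alpha\p_\beta\,d\mu$ terms are exactly as in the paper.
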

\begin{proof}
By Fact (a),
\begin{equation}\label{3.50}\begin{split}
M_{3,1}=&3\int_{M} \sigma_2(\nabla^2 \p)\psi d\mu- \int_{M} Ric_{\alpha\beta}\p_\alpha\p_\beta \psi d\mu\\
=&3\int_{M} \sigma_2(\nabla^2 \p)(1-\sum_{k=1}^\infty C_k |\nabla \p|^{2k}) d\mu- \int_{M} Ric_{\alpha\beta}\p_\alpha\p_\beta \psi d\mu\\
=& 3\int_{M} \sigma_2(\nabla^2 \p)d\mu -3 \sum_{k=1}^\infty C_k A_k - \int_{M} Ric_{\alpha\beta}\p_\alpha\p_\beta \psi d\mu.\\
\end{split}
\end{equation}
Substituting the formula of $A_k$ in Proposition \ref{3.4} and the equality that
\begin{equation}\begin{split}\int_{M} \sigma_2(\nabla^2 \p)d\mu=&\frac{1}{2}\int_{M} \p_{\alpha\beta}(\Delta \p \delta_{\alpha\beta}- \p_{\alpha\beta})  d\mu\\
=&\frac{1}{2}\int_{M} -\p_\alpha \left( (\Delta \p)_\alpha- \p_{\alpha\beta,\beta}\right)  d\mu\\
=& \frac{1}{2}\int_{M} Ric_{\alpha\beta}\p_\alpha\p_\beta  d\mu, \end{split}\end{equation}
in (\ref{3.50}), we derive (\ref{3.51}).
\end{proof}
\noindent Applying Proposition \ref{3.2} and Corollary \ref{3.5}, we get
\begin{equation}\label{star3''}
\begin{split}
 &M_{3,1}+M_{3,2}+M_{3,3}\\
=&-3 \int_{M}[{T}_2]_{\alpha\beta}(\nabla^2\p ,\nabla^2\p) \p_\alpha\p_\beta F d\mu+
3\int_{M} Ric_{\alpha\beta}\p_\alpha\p_\beta G  d\mu\\
&-2  \int_{M}[{T}_2]_{\alpha\beta}(\nabla^2\p ,L)\p_\alpha\p_\beta d\mu+\int_{M} \sigma_2(L)\psi^3 d\mu\\
&+\int_{M} Ric_{\alpha\beta}\p_\alpha\p_\beta \psi d\mu-\int_{M} [T_1]_{\alpha\beta}(L)L_{\beta\gamma}\p_\alpha\p_\gamma \psi d\mu.
\end{split}
\end{equation}
By the Gauss equation, \begin{equation}\label{Gauss}Ric_{\alpha\beta} = [T_1]_{\alpha\gamma}(L)L_{\gamma\beta}.\end{equation}
Thus the last two terms in (\ref{star3''}) are canceled.
Therefore
\begin{equation}\label{star3}
\begin{split}
 &M_{3,1}+M_{3,2}+M_{3,3}\\
=&-3 \int_{M}[{T}_2]_{\alpha\beta}(\nabla^2\p ,\nabla^2\p) \p_\alpha\p_\beta F d\mu+
3\int_{M} Ric_{\alpha\beta}\p_\alpha\p_\beta G  d\mu\\
&-2  \int_{M}[{T}_2]_{\alpha\beta}(\nabla^2\p ,L)\p_\alpha\p_\beta d\mu+\int_{M} \sigma_2(L)\psi^3 d\mu.\\
\end{split}
\end{equation}
We now apply Fact (c): $3G|\nabla \p|^2= 1-\psi^3$ to combine the second and the last term in (\ref{star3})
\begin{equation}\label{4.2}
\begin{split}
&3\int_{M} Ric_{\alpha\beta}\p_\alpha\p_\beta G  d\mu+ \int_{M} \sigma_2(L)\psi^3 d\mu\\
=& 3\int_{M} Ric_{\alpha\beta}\p_\alpha\p_\beta G  d\mu +\int_{M} \sigma_2(L)(1-3G|\nabla \p|^2) d\mu\\
\end{split}
\end{equation}
By (\ref{Gauss}) and the fact that
\begin{equation}
[T_1]_{\alpha\gamma}(L)L_{\gamma\beta}- \sigma_2(L)\delta_{\alpha\beta}
=[{T}_2]_{\alpha\beta}(L,L),
\end{equation}
(\ref{4.2}) is equal to
\begin{equation}
\begin{split}
&3\int_{M} [T_1]_{\alpha\gamma}(L)L_{\gamma\beta}\p_\alpha\p_\beta G  d\mu +\int_{M} \sigma_2(L)(1-3G|\nabla \p|^2) d\mu\\
=&\int_{M} \sigma_2(L) d\mu- 3\int_{M}[{T}_2]_{\alpha\beta}(L,L)\p_\alpha\p_\beta G d\mu.
\end{split}
\end{equation}
In conclusion (\ref{star3}) is deduced to
\begin{equation}\label{star3'}
M_{3,1}+M_{3,2}+M_{3,3}=\int_{M} \sigma_2(L) d\mu+ E_1+ E_2 + E_3,
\end{equation}
where
\begin{equation}
\begin{split}
E_1=& -3 \int_{M}[{T}_2]_{\alpha\beta}(\nabla^2\p ,\nabla^2\p) \p_\alpha\p_\beta F d\mu\\
E_2=& -2  \int_{M}[{T}_2]_{\alpha\beta}(\nabla^2\p ,L)\p_\alpha\p_\beta d\mu\\
E_3= &- 3\int_{M}[{T}_2]_{\alpha\beta}(L,L)\p_\alpha\p_\beta G d\mu.\\
\end{split}
\end{equation}
In the following we will prove
\begin{proposition}\label{3.12}
\begin{equation}E_1+ E_2+ E_3\leq -\frac{1}{4}\int_{M}[{T}_2]_{\alpha\beta}(L)\p_\alpha\p_\beta d\mu. \end{equation}
From this, it is obvious that
\begin{equation}\begin{split}
M_{3,1}+M_{3,2}+M_{3,3}\leq& \int_{M} \sigma_2(L) d\mu-\frac{1}{4}\int_{M}[{T}_2]_{\alpha\beta}(L)\p_\alpha\p_\beta d\mu.\\
\end{split}\end{equation}
\end{proposition}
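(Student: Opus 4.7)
The plan is to reduce Proposition 3.12 to the pointwise non-negativity of an integrand built from $\bar P := \nabla^2_{\alpha\beta}\phi + \phi_n L_{\alpha\beta} = \bar\nabla^2_{\alpha\beta}\phi|_{TM}$ (the restriction to $TM$ of the ambient Hessian, strictly positive definite because $\phi$ is convex) and $L \in \Gamma^+_3$. The strategy is to substitute $P = \bar P - \phi_n L$, exploit bilinearity of $[T_2]$, and verify positivity of each resulting term using mixed Newton--MacLaurin inequalities together with closed-form Taylor expressions for $F$ and $G$.

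The desired inequality is equivalent to
$$\int_M \bigl\{3F\,[T_2](P,P) + 2\,[T_2](P,L) + \bigl(3G - \tfrac14\bigr)\,[T_2](L,L)\bigr\}_{\alpha\beta}\phi_\alpha\phi_\beta\,d\mu \ge 0,$$
and after substituting $P = \bar P - \phi_n L$ the integrand rewrites as
$$3F\,T_2(\bar P)_{\alpha\beta}\phi_\alpha\phi_\beta + (2 - 6F\phi_n)\,[T_2](\bar P, L)_{\alpha\beta}\phi_\alpha\phi_\beta + \bigl(3F\phi_n^2 - 2\phi_n + 3G - \tfrac14\bigr)\,T_2(L)_{\alpha\beta}\phi_\alpha\phi_\beta.$$
The plan is to show that each of these three summands is pointwise non-negative.

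The first summand is non-negative because $\bar P$ is positive definite, so $T_2(\bar P) \ge 0$. For the middle term, Fact (e) together with $\phi_n \le \psi$ gives $6F\phi_n \le 6F\psi \le \tfrac32 < 2$, so the coefficient is strictly positive; moreover, by the polarization identity $[T_2](\bar P, L)_{\alpha\beta}\phi_\alpha\phi_\beta = 3\,\sigma_3(\bar P, L, \phi\phi^T)$ and the mixed Newton--MacLaurin inequality, this tensor is non-negative. Indeed $\bar P, L \in \Gamma^+_3$ and $\phi\phi^T$ lies in the closure of $\Gamma^+_3$ (as the $\epsilon \to 0^+$ limit of $\phi\phi^T + \epsilon I \in \Gamma^+_{n-1}$), so the strict positivity of mixed $\sigma_3$ on the Garding cone passes to the boundary by continuity.

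The third summand requires showing $q(\phi_n) := 3F\phi_n^2 - 2\phi_n + 3G - \tfrac14 \ge 0$ for $\phi_n \in [-\psi, \psi]$. Solving Facts (a), (c), (d) yields the closed forms $F(\psi) = (\psi+2)/[3(1+\psi)^2]$ and $G(\psi) = (1+\psi+\psi^2)/[3(1+\psi)]$; in particular $1/(3F) = \psi + 1/(\psi+2) > \psi$, so the vertex of the convex parabola $q$ lies to the right of $\psi$ and $\min_{[-\psi,\psi]} q = q(\psi)$. Using Fact (d), one computes $q(\psi) = \tfrac{3F}{2}(1+\psi^2) - \tfrac{\psi}{2} - \tfrac14$, and substituting the closed form of $F$ reduces $q(\psi) \ge 0$ to the elementary polynomial identity $2(\psi+2)(1+\psi^2) - (2\psi+1)(1+\psi)^2 = (1-\psi)(\psi+3) \ge 0$, valid on $\psi \in [0,1]$ (with equality only at $\psi = 1$, corresponding to $|\nabla\phi| = 0$). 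The delicate step is the mixed Newton--MacLaurin positivity of $[T_2](\bar P, L)\phi\phi$, since the Garding-cone argument must be extended to the rank-one matrix $\phi\phi^T$ via approximation; the remaining work is algebraic and is dictated by the closed forms for $F$ and $G$ which collapse the problem to the factorization $(1-\psi)(\psi+3) \ge 0$.
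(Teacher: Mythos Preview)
Your argument is correct and follows the same overall strategy as the paper: expand via $\nabla^2\phi = \bar\nabla^2\phi|_{TM} - \phi_n L$, discard the non-negative $[T_2](\bar P,\bar P)$ piece using convexity of $\phi$, bound the mixed $[T_2](\bar P,L)$ contribution by zero (the paper asserts $[T_2](\bar P,L)\ge 0$ as a matrix via G{\aa}rding, which is slightly stronger than your rank-one contraction argument but gives the same conclusion here), and reduce everything to the scalar inequality $P(x):=-3F\phi_n^2+2\phi_n-3G\le -\tfrac14$ for the coefficient of $[T_2](L,L)_{\alpha\beta}\phi_\alpha\phi_\beta$.

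The one substantive difference is how you handle that scalar inequality. The paper attacks it with termwise Taylor-coefficient bounds (its (3.8)--(3.9)) and then replaces $-\tfrac34\psi$ by $-\tfrac34\phi_n$, arriving at $P\le -\tfrac14(3\phi_n-2)(\phi_n-1)-\tfrac14$; but $(3\phi_n-2)(\phi_n-1)$ is \emph{negative} on $(2/3,1)$, so that chain of estimates actually loses too much and does not yield $P\le -\tfrac14$ as written. Your route---solving Facts (c)--(d) for the closed forms $F=(\psi+2)/[3(1+\psi)^2]$ and $G=(1+\psi+\psi^2)/[3(1+\psi)]$, observing that the vertex $1/(3F)=\psi+1/(\psi+2)$ of the convex parabola $q$ lies to the right of $\psi$ so that $\min_{[-\psi,\psi]}q=q(\psi)$, and then reducing $q(\psi)\ge 0$ to the factorization $2(\psi+2)(1+\psi^2)-(2\psi+1)(1+\psi)^2=(1-\psi)(\psi+3)\ge 0$---is both cleaner and genuinely closes this gap. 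It also makes the equality case $\psi=1$, $\phi_n=\psi$ transparent.
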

\begin{proof} By $\nabla^2_{\alpha\beta} \p= \bar{\nabla}^2_{\alpha\beta}\p- L_{\alpha\beta}\p_n$, we obtain
\begin{equation}
\begin{split}
E_1=& -3 \int_{M} [{T}_2]_{\alpha\beta}(\bar{\nabla}^2\p- L\p_n, \bar{\nabla}^2\p- L\p_n)\p_\alpha\p_\beta F d\mu\\
=& -3 \int_{M} [{T}_2]_{\alpha\beta}(\bar{\nabla}^2\p, \bar{\nabla}^2\p)\p_\alpha\p_\beta F d\mu\\
&+6 \int_{M} [{T}_2]_{\alpha\beta}(\bar{\nabla}^2\p, L)\p_n\p_\alpha\p_\beta F d\mu\\
&-3 \int_{M} [{T}_2]_{\alpha\beta}(L, L)\p_n^2 \p_\alpha\p_\beta F d\mu\\
=:& E_{1,1}+ E_{1,2}+ E_{1,3}\\
\end{split}
\end{equation}
Since $\p$ is a convex function, $\bar{\nabla}^2\p$ is positive definite. Therefore
\begin{equation}
[{T}_2]_{\alpha\beta} (\bar{\nabla}^2\p, \bar{\nabla}^2\p)\geq 0.
\end{equation}
This together with $F\geq 0$ implies that $E_{1,1}\leq 0 $. So $E_1\leq E_{1,2}+ E_{1,3}.$

For $E_2$, using $\nabla^2_{\alpha\beta} \p= \bar{\nabla}^2_{\alpha\beta}\p- L_{\alpha\beta}\p_n$, we get
\begin{equation}
\begin{split}
E_2=& -2 \int_{M} [{T}_2]_{\alpha\beta}(\bar{\nabla}^2\p , L)\p_\alpha\p_\beta d\mu\\
&+ 2 \int_{M} [{T}_2]_{\alpha\beta}(L, L)\p_n \p_\alpha\p_\beta d\mu\\
:=& E_{2,1}+ E_{2,2}.\\
\end{split}
\end{equation}
We next observe that
\begin{lemma}\label{3.6}
\begin{equation}
E_{1,2}+ E_{2,1}\leq -\frac{1}{2}\int_{M} [{T}_2]_{\alpha\beta}(\bar{\nabla}^2\p , L)\p_\alpha\p_\beta d\mu\leq 0.
\end{equation}
\end{lemma}
\begin{proof}
\begin{equation}
E_{1,2}+ E_{2,1}= \int_{M} [{T}_2]_{\alpha\beta}(\bar{\nabla}^2\p , L)\p_\alpha\p_\beta(6F\p_n-2) d\mu.
\end{equation}
On the one hand, $6F\p_n-2\leq -\frac{1}{2}$ because $F\p_n\leq F\psi$, and $F\psi \leq \frac{1}{4}$ (Fact (e)); on the other hand, since $L_{\alpha\beta}\in \Gamma_3^+$ and $\bar{\nabla}^2\p\geq 0$ (positive definite), we have
\begin{equation}
[{T}_2]_{\alpha\beta} (\bar{\nabla}^2\p, L)\geq 0.
\end{equation}
Therefore \begin{equation}E_{1,2}+ E_{2,1}\leq -\frac{1}{2}\int_{M} [{T}_2]_{\alpha\beta}(\bar{\nabla}^2\p , L)\p_\alpha\p_\beta d\mu\leq 0.\end{equation}
\end{proof}
\noindent We continue the proof of Proposition \ref{3.12}. Applying Lemma \ref{3.6}, we have
\begin{equation}\begin{split}\label{3.10}
E_1+ E_2+ E_3\leq &E_{1,3}+ E_{2,2}+ E_3\\
= &\int_{M} [{T}_2]_{\alpha\beta}(L)\p_\alpha\p_\beta P(x) d\mu,
\end{split}
\end{equation}
where $P(x):= -3 F\p_n^2+ 2\p_n-3G $.
By Fact (d), $2G= \psi+ F|\nabla \p|^2$.
Thus $$P=-3 F\p_n^2+ 2\p_n-\frac{3}{2}\psi -\frac{3}{2}|\nabla \p|^2.$$
It is not hard to show $P\leq -\frac{1}{4}$.
In fact,
\begin{equation}\label{3.7}
\begin{split}
P=&-3 F\p_n^2+ 2\p_n-\frac{3}{2}\psi -\frac{3}{2}F|\nabla \p|^2\\
= & -3 \sum_{k=1}^{\infty}\frac{k}{k+1}C_k |\nabla \p|^{2(k-1)}\p_n^2+ 2\p_n-\frac{3}{2}\psi -\frac{3}{2} \sum_{k=1}^{\infty}\frac{k}{k+1}C_k |\nabla \p|^{2k}.\\
\end{split}
\end{equation}
To estimate $-3 \sum_{k=1}^{\infty}\frac{k}{k+1}C_k |\nabla \p|^{2(k-1)}\p_n^2$ in (\ref{3.7}), we observe $C_k\geq 0$, and $C_1=\frac{1}{2}$. So
we drop the sum over $k\geq 2$ and get
\begin{equation}\label{3.8}-3\sum_{k=1}^{\infty}\frac{k}{k+1}C_k |\nabla \p|^{2(k-1)}\p_n^2
\leq - \frac{3}{4} \p_n^2.
\end{equation}
And to estimate $-\frac{3}{2} \sum_{k=1}^{\infty}\frac{k}{k+1}C_k |\nabla \p|^{2k}$ in (\ref{3.7}), we use
the fact that $\frac{k}{k+1}\geq\frac{1}{2}$ for all $k\geq1$, and Fact (a): $\psi=1- \sum_{k=1}^{\infty} C_k |\nabla \p|^{2k}$
to get
\begin{equation}\label{3.9}-\frac{3}{2} \sum_{k=1}^{\infty}\frac{k}{k+1}C_k |\nabla \p|^{2k}
\leq -\frac{3}{4}\sum_{k=1}^{\infty} C_k |\nabla \p|^{2k}=  -\frac{3}{4} (1-\psi).
\end{equation}
Applying (\ref{3.8}) and (\ref{3.9}) to (\ref{3.7}), we have
\begin{equation}
\begin{split}
P\leq&  - \frac{3}{4} \p_n^2   + 2\p_n-\frac{3}{2}\psi-\frac{3}{4} (1-\psi)\\
\leq & - \frac{3}{4} \p_n^2   + 2\p_n-\frac{3}{4}\p_n-\frac{3}{4},\\
\end{split}\end{equation}
due to the fact that $\p_n\leq \psi$. Now
\begin{equation}
\begin{split}
 - \frac{3}{4} \p_n^2   + 2\p_n-\frac{3}{4}\p_n-\frac{3}{4}
= -\frac{1}{4}(3\p_n-2)(\p_n-1)-\frac{1}{4}\leq -\frac{1}{4},
\end{split}
\end{equation}
for $\p_n\leq 1$. Thus we obtain $P\leq -\frac{1}{4}$. By (\ref{3.10}), it concludes that
\begin{equation}E_1+ E_2+ E_3\leq -\frac{1}{4}\int_{M} [{T}_2]_{\alpha\beta}(L)\p_\alpha\p_\beta d\mu\leq0.
\end{equation}
This completes the proof of Proposition \ref{3.12}
\end{proof}
Finally, we are ready to give the proof of Theorem \ref{thm-2} using Proposition \ref{3.12}.
\begin{proof}By Proposition \ref{3.1} and Definition \ref{4.3}
\begin{equation}\label{3.14}
\begin{split}
L_3=& L_{3,1}+ L_{3,2}+ L_{3,3}\\
= &  M_{3,1}+ M_{3,2}+ M_{3,3}+ \int_{M}3\sigma_2(\nabla^2 \p)(\p_n-\psi)\\
&+ \frac{3}{2}\Sigma_{2}(\nabla^2 \p, L)
(\p_n^2-\psi^2)+ \sigma_2(L)(\p_n^3-\psi^3)d\mu.\\
\end{split}
\end{equation}
We will prove
\begin{equation}\label{4.4} \int_{M}3\sigma_2(\nabla^2 \p)(\p_n-\psi)+\frac{3}{2}\Sigma_{2}(\nabla^2 \p, L)
(\p_n^2-\psi^2)+ \sigma_2(L)(\p_n^3-\psi^3)d\mu\leq0 .\end{equation}
Recall that $\Sigma_2(A,B):= Tr(A)Tr(B)- Tr(AB)$.
This immediately implies that $$\sigma_2 (A)= \frac{1}{2}\Sigma_{2}(A,A).$$
Using $\nabla^2_{\alpha\beta} \p= \bar{\nabla}^2_{\alpha\beta}\p- L_{\alpha\beta}\p_n$ and
the linearity of $\Sigma_2(\cdot, \cdot)$, first of all
\begin{equation}\label{3.15'}
\begin{split}
&3\int_{M}\sigma_2(\nabla^2 \p)(\p_n-\psi)d\mu\\
=& \frac{3}{2}\int_{M}\Sigma_2(\bar{\nabla}^2_{\alpha\beta}\p- L_{\alpha\beta}\p_n,\bar{\nabla}^2_{\alpha\beta}\p- L_{\alpha\beta}\p_n)(\p_n-\psi)d\mu\\
= &    \int_{M}3 \sigma_2(\bar{\nabla}^2 \p)(\p_n-\psi) -3\Sigma_2(\bar{\nabla}^2 \p, L)\p_n(\p_n-\psi)+
3\sigma_2(L)\p_n^2 (\p_n-\psi)d\mu.\\
\end{split}
\end{equation}
Since $\sigma_2(\bar{\nabla}^2 \p)\geq 0$ and $\p_n\leq \psi$, the first term in the above line is non-positive.
Thus
\begin{equation}\label{3.15}
\begin{split}
&3\int_{M}\sigma_2(\nabla^2 \p)(\p_n-\psi)d\mu\\
\leq& \int_{M}-3\Sigma_2(\bar{\nabla}^2 \p, L)\p_n(\p_n-\psi)+
3\sigma_2(L)\p_n^2 (\p_n-\psi)d\mu,\\
\end{split}
\end{equation}
Secondly,
\begin{equation}\label{3.16}
\begin{split}
 &\frac{3}{2}\Sigma_{2}(\nabla^2 \p, L) (\p_n^2-\psi^2)\\
=&\frac{3}{2}\Sigma_{2}(\bar{\nabla}^2 \p, L) (\p_n^2-\psi^2)-3 \sigma_2(L)\p_n(\p_n^2-\psi^2).\\
\end{split}
\end{equation}
Using (\ref{3.15}) and (\ref{3.16}), we have
\begin{equation}\label{3.17}
\begin{split}
& \int_{M}3\sigma_2(\nabla^2 \p)(\p_n-\psi)+\frac{3}{2}\Sigma_{2}(\nabla^2 \p, L)
(\p_n^2-\psi^2)+ \sigma_2(L)(\p_n^3-\psi^3)d\mu\\
\leq&-3 \int_{M}\Sigma_{2}(\bar{\nabla}^2 \p, L)\left(\p_n(\p_n-\psi)-\frac{1}{2}(\p_n^2-\psi^2)\right) d\mu\\
&+ \int_{M}\sigma_{2}(L)
\left(3\p_n^2 (\p_n-\psi)-3\p_n(\p_n^2- \psi^2)+ (\p_n^3-\psi^3)\right) d\mu\\
=&-\frac{3}{2} \int_{M}\Sigma_{2}(\bar{\nabla}^2 \p, L)(\p_n-\psi)^2d\mu\\
&+\int_{M}\sigma_{2}(L)(\p_n-\psi)^3  d\mu.\\
\end{split}
\end{equation}
This is less or equal than 0, due to the fact that $\Sigma_{2}(\bar{\nabla}^2 \p, L)\geq 0$, $\sigma_{2}(L)\geq 0$ and $\p_n\leq \psi$.
Thus (\ref{4.4}) is proved.
Plugging (\ref{4.4}) into (\ref{3.14}) and applying Proposition \ref{3.12}, we conclude that
\begin{equation}\label{f3.18}
\begin{split}
L_3\leq& M_{3,1}+ M_{3,2}+ M_{3,3}\\
\leq &\int_{M}\sigma_2(L)d\mu-\frac{1}{4}\int_{M}[{T}_2]_{\alpha\beta}(L)\p_\alpha\p_\beta d\mu.\\
\end{split}\end{equation}
This completes the proof of Theorem \ref{thm-2}.
\end{proof}

\begin{bibdiv}
\begin{biblist}
\bib{Alex1}{article}{
AUTHOR = {Alexandroff, A. D.},
     TITLE = {Zur Theorie der gemischten Volumina von konvexen K\"{o}rpern, II. Neue Ungleichungen zwischen
den gemischten Volumina und ihre Anwendungen (in Russian)},
   JOURNAL = {Mat. Sb. (N.S.)},
    VOLUME = {2},
      YEAR = {1937},
     PAGES = {1205-1238},
   MRCLASS = {},
  MRNUMBER = {},
MRREVIEWER = {},
}



\bib{Brenier}{article}{
AUTHOR = {Brenier, Yann},
     TITLE = {Polar factorization and monotone rearrangement of
              vector-valued functions},
   JOURNAL = {Comm. Pure Appl. Math.},
  FJOURNAL = {Communications on Pure and Applied Mathematics},
    VOLUME = {44},
      YEAR = {1991},
    NUMBER = {4},
     PAGES = {375--417},
      ISSN = {0010-3640},
     CODEN = {CPAMA},
   MRCLASS = {46E40 (35Q99 46E99 49Q99)},
  MRNUMBER = {1100809 (92d:46088)},
MRREVIEWER = {Robert McOwen},
       URL = {http://dx.doi.org/10.1002/cpa.3160440402},
}
\bib{Caffarelli}{article}{
AUTHOR = {Caffarelli, Luis A.},
     TITLE = {Boundary regularity of maps with convex potentials},
   JOURNAL = {Comm. Pure Appl. Math.},
  FJOURNAL = {Communications on Pure and Applied Mathematics},
    VOLUME = {45},
      YEAR = {1992},
    NUMBER = {9},
     PAGES = {1141--1151},
      ISSN = {0010-3640},
     CODEN = {CPAMA},
   MRCLASS = {35B65 (35J65)},
  MRNUMBER = {1177479 (93k:35054)},
MRREVIEWER = {S. K. Vodop{\cprime}yanov},
       URL = {http://dx.doi.org/10.1002/cpa.3160450905},
}

\bib{Caffarelli1}{article}{
AUTHOR = {Caffarelli, Luis A.},
     TITLE = {The regularity of mappings with a convex potential},
   JOURNAL = {J. Amer. Math. Soc.},
  FJOURNAL = {Journal of the American Mathematical Society},
    VOLUME = {5},
      YEAR = {1992},
    NUMBER = {1},
     PAGES = {99--104},
      ISSN = {0894-0347},
   MRCLASS = {35B65 (35A30 35J60)},
  MRNUMBER = {1124980 (92j:35018)},
       URL = {http://dx.doi.org/10.2307/2152752},
}
\bib{Caffarelli2}{article}{
AUTHOR = {Caffarelli, Luis A.},
     TITLE = {Boundary regularity of maps with convex potentials. {II}},
   JOURNAL = {Ann. of Math. (2)},
  FJOURNAL = {Annals of Mathematics. Second Series},
    VOLUME = {144},
      YEAR = {1996},
    NUMBER = {3},
     PAGES = {453--496},
      ISSN = {0003-486X},
     CODEN = {ANMAAH},
   MRCLASS = {35B65 (35J60 35J65)},
  MRNUMBER = {1426885 (97m:35027)},
MRREVIEWER = {John Urbas},
       URL = {http://dx.doi.org/10.2307/2118564},
}

\bib{Castillon}{article}{
AUTHOR = {Castillon, Philippe},
     TITLE = {Submanifolds, isoperimetric inequalities and optimal
              transportation},
   JOURNAL = {J. Funct. Anal.},
  FJOURNAL = {Journal of Functional Analysis},
    VOLUME = {259},
      YEAR = {2010},
    NUMBER = {1},
     PAGES = {79--103},
      ISSN = {0022-1236},
     CODEN = {JFUAAW},
   MRCLASS = {49Q20 (46E35 53C42)},
  MRNUMBER = {2610380 (2011b:49118)},
MRREVIEWER = {Luca Granieri},
       URL = {http://dx.doi.org/10.1016/j.jfa.2010.03.001},
}
\bib{ChangWang}{article}{
AUTHOR = {Chang, Sun-Yung Alice},
AUTHOR = {Wang, Yi},
     TITLE = {On {A}leksandrov-{F}enchel inequalities for {$k$}-convex
              domains},
   JOURNAL = {Milan J. Math.},
  FJOURNAL = {Milan Journal of Mathematics},
    VOLUME = {79},
      YEAR = {2011},
    NUMBER = {1},
     PAGES = {13--38},
      ISSN = {1424-9286},
   MRCLASS = {52A20 (52A39 52A40)},
  MRNUMBER = {2831436 (2012h:52005)},
       URL = {http://dx.doi.org/10.1007/s00032-011-0159-2},
}
\bib{ChangWang1}{article}{
AUTHOR = {Chang, Sun-Yung Alice},
AUTHOR = {Wang, Yi},
     TITLE = {Inequalities for quermassintegrals on {$k$}-convex domains},
   JOURNAL = {in submission},
   VOLUME = {},
      YEAR = {},
     PAGES = {},
      ISSN = {}
       DOI = {},
      }

\bib{DeGiorgi}{article}{
    AUTHOR = {De Giorgi, Ennio},
     TITLE = {Sulla propriet\`a isoperimetrica dell'ipersfera, nella classe
              degli insiemi aventi frontiera orientata di misura finita},
   JOURNAL = {Atti Accad. Naz. Lincei. Mem. Cl. Sci. Fis. Mat. Nat. Sez. I
              (8)},
  FJOURNAL = {Atti della Accademia Nazionale dei Lincei. Memorie. Classe di
              Scienze Fisiche, Matematiche e Naturali. Sezione Ia.
              Matematica, Meccanica, Astronomia, Geodesia e},
    VOLUME = {5},
      YEAR = {1958},
     PAGES = {33--44},
   MRCLASS = {49.00 (28.00)},
  MRNUMBER = {0098331 (20 \#4792)},
}

\bib{Figalli}{article}{
AUTHOR = {Figalli, A.},
AUTHOR = {Maggi, F.},
AUTHOR = {Pratelli, A.},
     TITLE = {A mass transportation approach to quantitative isoperimetric
              inequalities},
   JOURNAL = {Invent. Math.},
  FJOURNAL = {Inventiones Mathematicae},
    VOLUME = {182},
      YEAR = {2010},
    NUMBER = {1},
     PAGES = {167--211},
      ISSN = {0020-9910},
     CODEN = {INVMBH},
   MRCLASS = {49Q20 (35J96)},
  MRNUMBER = {2672283 (2011f:49070)},
MRREVIEWER = {Lorenzo Brasco},
       URL = {http://dx.doi.org/10.1007/s00222-010-0261-z},
}
\bib{Gromov}{article}{
author={Gromov, M.},
   title={Isometric immersions of Riemannian manifolds},
   note={The mathematical heritage of \'Elie Cartan (Lyon, 1984)},
   journal={Ast\'erisque Numero Hors Serie},
   date={1985},
   number={},
   pages={129--133},
   issn={0303-1179},
}

\bib{GuanLi}{article}{
AUTHOR = {Guan, Pengfei},
AUTHOR = {Li, Junfang},
     TITLE = {The quermassintegral inequalities for {$k$}-convex starshaped
              domains},
   JOURNAL = {Adv. Math.},
  FJOURNAL = {Advances in Mathematics},
    VOLUME = {221},
      YEAR = {2009},
    NUMBER = {5},
     PAGES = {1725--1732},
      ISSN = {0001-8708},
     CODEN = {ADMTA4},
   MRCLASS = {52B60},
  MRNUMBER = {2522433 (2010i:52021)},
       URL = {http://dx.doi.org/10.1016/j.aim.2009.03.005},
}

\bib{Huisken}{article}{
AUTHOR = {Huisken, Gerhard},
     TITLE = {private communication},
   JOURNAL = {},
     VOLUME = {},
      YEAR = {},
    NUMBER = {},
     PAGES = {}
      ISSN = {},
  }

\bib{HuiskenIlmanen}{article}{
 AUTHOR = {Huisken, Gerhard},
  AUTHOR = {Ilmanen, Tom}
     TITLE = {The inverse mean curvature flow and the {R}iemannian {P}enrose
              inequality},
   JOURNAL = {J. Differential Geom.},
  FJOURNAL = {Journal of Differential Geometry},
    VOLUME = {59},
      YEAR = {2001},
    NUMBER = {3},
     PAGES = {353--437},
      ISSN = {0022-040X},
     CODEN = {JDGEAS},
   MRCLASS = {53C44 (35D05 35J20 35J60 53C21 53C42 83C57)},
  MRNUMBER = {1916951 (2003h:53091)},
MRREVIEWER = {John Urbas},
       URL = {http://projecteuclid.org/getRecord?id=euclid.jdg/1090349447},
}

\bib{McCann}{article}{
AUTHOR = {McCann, Robert J.},
     TITLE = {A convexity principle for interacting gases},
   JOURNAL = {Adv. Math.},
  FJOURNAL = {Advances in Mathematics},
    VOLUME = {128},
      YEAR = {1997},
    NUMBER = {1},
     PAGES = {153--179},
      ISSN = {0001-8708},
     CODEN = {ADMTA4},
   MRCLASS = {82B05 (26B25 90C08)},
  MRNUMBER = {1451422 (98e:82003)},
MRREVIEWER = {Carlos Matr{\'a}n},
       URL = {http://dx.doi.org/10.1006/aima.1997.1634},
}

\bib{Mk}{article}{
AUTHOR = {Minkowski, H.}
     TITLE = {Theorie der konvexen K\"{o}rper, insbesondere Begr\"{u}ndung ihres Oberfl\"{a}chenbegriffs.},
   JOURNAL = {Ges, Abh., Leipzig-Berlin},
      VOLUME = {},
      YEAR = {1911},
    NUMBER = {2},
     PAGES = {131-229},
      ISSN = {},
     CODEN = {},
   MRCLASS = {},
  MRNUMBER = {},
MRREVIEWER = {},
       URL = {},
}

\bib{Schwartz}{article}{
   author={Schwartz, H. A.},
   title={Beweis des Satzes, dass die Kugel kleinere Oberfl\"{a}che besitzt, als jeder andere K\"{o}rper gleichen Volumens.},
   journal={Nachrichten K\"{o}niglichen Gesellschaft Wissenschaften G\"{o}ttingen},
   volume={},
   date={1884},
   pages={1-13},
   issn={},
}

\bib{Trudinger}{article}{
AUTHOR = {Trudinger, Neil S.},
     TITLE = {Isoperimetric inequalities for quermassintegrals},
   JOURNAL = {Ann. Inst. H. Poincar\'e Anal. Non Lin\'eaire},
  FJOURNAL = {Annales de l'Institut Henri Poincar\'e. Analyse Non
              Lin\'eaire},
    VOLUME = {11},
      YEAR = {1994},
    NUMBER = {4},
     PAGES = {411--425},
      ISSN = {0294-1449},
   MRCLASS = {52A40 (35J60 52A39)},
  MRNUMBER = {1287239 (95k:52013)},
MRREVIEWER = {Jane R. Sangwine-Yager},
}

\bib{Villani}{article}
{AUTHOR = {Cordero-Erausquin, D.},
AUTHOR = {Nazaret, B.},
AUTHOR = {Villani, C.},
     TITLE = {A mass-transportation approach to sharp {S}obolev and
              {G}agliardo-{N}irenberg inequalities},
   JOURNAL = {Adv. Math.},
  FJOURNAL = {Advances in Mathematics},
    VOLUME = {182},
      YEAR = {2004},
    NUMBER = {2},
     PAGES = {307--332},
      ISSN = {0001-8708},
     CODEN = {ADMTA4},
   MRCLASS = {26D15 (46E35)},
  MRNUMBER = {2032031 (2005b:26023)},
MRREVIEWER = {Olivier Druet},
       URL = {http://dx.doi.org/10.1016/S0001-8708(03)00080-X},
}

\end{biblist}
\end{bibdiv}

\end{document}